\newtheorem{theorem}{Theorem}[section]
\newtheorem{lemma}[theorem]{Lemma}
\newtheorem{corollary}[theorem]{Corollary}
\newtheorem{remark}[theorem]{Remark}
\title{{\Large \bf  On the first Banhatti-Sombor index
\thanks{ Supported by the National Natural Science Foundation of China (Nos. 12071411 and 11771443).}~}}
\author{Zhen Lin$^{a}$\thanks{Corresponding author. E-mail addresses: lnlinzhen@163.com(Z. Lin), tb19080009b1@cumt.edu.cn(T. Zhou), vrkulli@gmail.com (V.R. Kulli), miaolianying@cumt.edu.cn (L. Miao).}, Ting Zhou$^b$, V.R. Kulli$^c$, Lianying Miao$^b$\\
{\footnotesize $^a$School of Mathematics and Statistics, Beijing Key Laboratory on MCAACI,}\\ {\footnotesize  Beijing Institute of Technology,}
\\  {\footnotesize  Beijing, 102488, P.R. China}\\
\footnotesize  $^b$School of Mathematics, China University of Mining and Technology,\\ \footnotesize Xuzhou, 221116, Jiangsu, P.R.
China \\
\footnotesize  $^c$Department of Mathematics, Gulbarga University,\\ \footnotesize Kalaburgi (Gulbarga)-585106, India
}
\date{}
\begin{document}
\openup 1.0\jot
\date{}\maketitle
\begin{abstract}
Let $d_v$ be the degree of the vertex $v$ in a connected graph $G$. The first Banhatti-Sombor index of $G$ is defined as $BSO(G) =\sum_{uv\in E(G)}\sqrt{\frac{1}{d^2_u}+\frac{1}{d^2_v}}$, which is a new vertex-degree-based topological index introduced by Kulli. In this paper, the
mathematical relations between the first Banhatti-Sombor index and some other well-known vertex-degree-based topological indices are established. In addition, the trees extremal with respect to the first Banhatti-Sombor index on trees and chemical trees are characterized, respectively.

\bigskip

\noindent {\bf MSC Classification:} 05C05, 05C07, 05C09, 05C92

\noindent {\bf Keywords:}  The first Banhatti-Sombor index; Degree; Tree
\end{abstract}
\baselineskip 20pt

\section{\large Introduction}
Let $G$ be a simple undirected connected graph with vertex set $V(G)$ and edge set $E(G)$. The number of vertices and edges of $G$ is called order and size, respectively. Denote by $\overline{G}$ the complement of $G$. For $v\in V(G)$, $d_v$ denotes the degree of vertex $v$ in $G$. The minimum and the maximum degree of $G$ are denoted by $\delta(G)$ and $\Delta(G)$, or simply $\delta$ and $\Delta$, respectively. A pendant vertex of $G$ is a vertex of degree $1$. A graph $G$ is called $(\Delta, \delta)$-semiregular if $\{d_u, d_v\} = \{\Delta, \delta\}$ holds for all edges $uv\in E(G)$. Denote by $K_n$, $C_n$, $P_n$ and $K_{1,\,n-1}$ the complete graph, cycle, path and star with $n$ vertices, respectively.

The study of topological indices of various graph structures has been of interest to chemists, mathematicians, and scientists from related fields due to the fact that the topological indices play a significant role in mathematical chemistry especially in the QSPR/QSAR modeling.
In 1975, the Randi\'{c} index of a graph $G$ introduced by Randi\'{c} \cite{R} is the most important and widely applied. It is defined as
$$R(G)=\sum\limits_{uv\in E(G)}\frac{1}{\sqrt{d_ud_v}}.$$

The modified second Zagreb index of a graph $G$, introduced by Nikoli\'{c} et al. \cite{NKMT}, is defined as
$$M_2^{*}(G)=\sum\limits_{uv\in E(G)}\frac{1}{d_ud_v}$$

The harmonic index and the inverse degree index of a graph $G$ proposed by Fajtlowicz \cite{F} are two the older vertex-degree-based topological indices.
They are respectively defined as
$$H(G)=\sum\limits_{uv\in E(G)}\frac{2}{d_u+d_v}, \quad ID(G)=\sum\limits_{uv\in E(G)}\left(\frac{1}{d_u^2}+\frac{1}{d_v^2}\right).$$

The symmetric division deg index, the inverse sum indeg index and the geometric-arithmetic index of a graph $G$, introduced by Vuki\v{c}evi\'{c}
\cite{V, VG, VF}, Ga\v{s}perov \cite{VG} and Furtula \cite{VF}, are respectively defined as
$$SDD(G)=\sum\limits_{uv\in E(G)}\frac{d_u^2+d_v^2}{2d_ud_v}, \quad ISI(G)=\sum\limits_{uv\in E(G)}\frac{d_ud_v}{d_u+d_v}, \quad GA(G)=\sum\limits_{uv\in E(G)}\frac{2\sqrt{d_ud_v}}{d_u+d_v}.$$
The forgotten topological index, introduced by Furtula and Gutman \cite{FG}, is defined as
$$F(G)=\sum\limits_{uv\in E(G)}\left(d_u^2+d_v^2\right)$$

In 2021, the Sombor index of a graph $G$ is defined as
$$SO(G) =\sum_{uv\in E(G)}\sqrt{d_u^2+d_v^2},$$
which is a novel vertex-degree-based molecular structure descriptor proposed by Gutman \cite{G}. The investigation of the Sombor index of graphs has quickly received much attention. In particular, Red\v{z}epovi\'{c} \cite{R2} showed that the Sombor index may be used successfully on modeling thermodynamic properties of compounds due to the fact that the Sombor index has satisfactory prediction potential in modeling entropy and enthalpy of vaporization of alkanes. Das et al. \cite{DCC} and Wang et al. \cite{WMLF} gave the mathematical relations between the Sombor index and some other well-known vertex-degree-based topological indices. For other related results, one may refer to \cite{CGR, DTW, LMZ, KG, MMM, RDA} and the references therein.

Inspired by work on Sombor index, the first Banhatti-Sombor index of a connected graph $G$ was introduced by Kulli \cite{K} very recently and is defined as
$$BSO(G) =\sum_{uv\in E(G)}\sqrt{\frac{1}{d^2_u}+\frac{1}{d^2_v}}.$$
We find that the new index  has close contact with numerous well-known vertex-degree-based topological indices. Moreover, the trees with the maximum and minimum first Banhatti-Sombor index among the set of trees with $n$ vertices are determined, respectively. In particular, the extremal values of the first Banhatti-Sombor index for chemical trees are characterized.

\section{\large  Preliminaries}

\begin{lemma}\label{le2,1} %------
For any edge $uv\in E(G)$, $d_u^2+d_v^2$ or $\frac{1}{d_u^2}+\frac{1}{d_v^2}$ is a constant if and only if $G$ is a regular graph (when $G$ is non-bipartite) or $G$ is a $(\Delta, \delta)$-semiregular bipartite graph (when $G$ is bipartite).
\end{lemma}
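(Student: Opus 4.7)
The plan is to prove the two directions separately and to notice that the quantities $d_u^2+d_v^2$ and $\frac{1}{d_u^2}+\frac{1}{d_v^2}$ admit the same analysis, because in both cases knowing $d_u$ and the common value $c$ determines $d_v$ uniquely for every edge $uv$. So I would carry out the argument once, for $d_u^2+d_v^2=c$, and remark that the reciprocal version goes through \emph{mutatis mutandis}.

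The sufficient direction is immediate. If $G$ is regular then $d_u=d_v$ on every edge, and if $G$ is $(\Delta,\delta)$-semiregular bipartite then $\{d_u,d_v\}=\{\Delta,\delta\}$ on every edge, so both quantities are constant on $E(G)$ in either case.

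For the necessary direction I would fix the value $c$ and pick a vertex $v_0$ with $d_{v_0}=\Delta$, and then propagate along paths. Since each neighbor $u$ of $v_0$ satisfies $d_u^2=c-\Delta^2$, all neighbors of $v_0$ share a common degree $d'$. Applying the same observation at any neighbor of $v_0$ then forces every second neighbor to have degree $\Delta$. Iterating this and using that $G$ is connected, every vertex of $G$ has degree in $\{\Delta,d'\}$, and its degree is determined by the parity of its distance from $v_0$.

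Finally, I would split into two cases. If $\Delta=d'$ then every vertex has degree $\Delta$ and $G$ is regular. If $\Delta\neq d'$, then no edge can have both endpoints of the same degree (otherwise $d_u^2+d_v^2$ would equal $2\Delta^2$ or $2(d')^2$, neither of which equals $\Delta^2+(d')^2=c$), so the two degree classes form a bipartition and $G$ is $(\Delta,d')$-semiregular bipartite with $d'=\delta$. Consequently, if $G$ is non-bipartite only the first case can occur and $G$ is regular, whereas if $G$ is bipartite both subcases produce a $(\Delta,\delta)$-semiregular bipartite graph (with $\Delta=\delta$ in the regular subcase). The only place that requires care is verifying that the degree assignment propagated from $v_0$ is globally consistent: connectedness guarantees that every vertex is reached, and the parity-based two-colouring is automatically consistent on a bipartite graph, while the presence of any odd closed walk would force $\Delta=d'$ and thereby collapse everything back to the regular subcase.
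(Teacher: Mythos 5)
Your argument is correct. Note first that the paper itself states Lemma~\ref{le2,1} without any proof (and without a citation), so there is no in-paper argument to compare against; your write-up supplies the missing justification. The key observation you use --- that the relation $d_u^2+d_v^2=c$ (or its reciprocal analogue) determines the degree of each neighbour uniquely from the degree of a vertex, so that degrees propagate along walks with period two --- is exactly the right mechanism, and your case split is sound: an odd closed walk forces $\Delta=d'$ and hence regularity, which is why the regular conclusion is the one available for non-bipartite $G$, while in the bipartite case the two parity classes carry degrees $\Delta$ and $d'=\delta$ and every edge joins the two classes, giving the $(\Delta,\delta)$-semiregular structure. Two small points worth tightening if you write this out in full: (i) the propagation should be phrased along walks rather than shortest paths, since a vertex can have a neighbour in its own distance level --- you do cover this with the odd-closed-walk remark, but it is cleaner to state the induction over walk length from the start; (ii) you should note explicitly that $c=\Delta^2+(d')^2$ because $v_0$ has at least one incident edge, which is what makes the ``no monochromatic edge'' step in the $\Delta\neq d'$ case legitimate. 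The reduction of the reciprocal case to the same argument is fine, since $\tfrac{1}{d_v^2}=c-\tfrac{1}{d_u^2}$ likewise determines $d_v$ uniquely.
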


\begin{lemma}\label{le2,2} %------
For any positive real number $a$ and $b$, we have
$$\frac{2\sqrt{2}(a^2+b^2+ab)}{3(a+b)}\leq \sqrt{a^2+b^2}\leq\frac{\sqrt{2}(a^2+b^2)}{a+b}$$
with equality if and only if $a=b$.
\end{lemma}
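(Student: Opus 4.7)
The plan is to prove the two inequalities separately, each by squaring and reducing to an obvious nonnegativity statement about a square.

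For the right-hand inequality $\sqrt{a^2+b^2} \leq \frac{\sqrt{2}(a^2+b^2)}{a+b}$, I would divide both sides by $\sqrt{a^2+b^2}$ (which is positive) to obtain the equivalent form $a+b \leq \sqrt{2(a^2+b^2)}$, and then square. The resulting inequality $(a+b)^2 \leq 2(a^2+b^2)$ rearranges to $(a-b)^2 \geq 0$, which is immediate, with equality iff $a=b$. This is the standard QM-AM bound for two numbers.

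For the left-hand inequality, after squaring and clearing the positive denominator $3(a+b)$, the goal becomes $8(a^2+ab+b^2)^2 \leq 9(a+b)^2(a^2+b^2)$. The cleanest route is to pass to the elementary symmetric functions $s=a+b$ and $p=ab$, under which $a^2+b^2 = s^2-2p$ and $a^2+ab+b^2 = s^2-p$; the inequality then reads $8(s^2-p)^2 \leq 9 s^2(s^2-2p)$, and expanding both sides reduces it to $s^4 - 2 s^2 p - 8 p^2 \geq 0$. The key algebraic observation is the factorization
\[
s^4 - 2 s^2 p - 8 p^2 \;=\; (s^2-4p)(s^2+2p),
\]
in which $s^2-4p = (a-b)^2 \geq 0$ and $s^2+2p > 0$ since $a,b>0$. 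Both factors are nonnegative, and the first vanishes exactly when $a=b$, yielding both the inequality and its equality condition in one stroke.

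The step I expect to be the main obstacle is recognizing the factorization $s^4 - 2 s^2 p - 8 p^2 = (s^2-4p)(s^2+2p)$; without it one is staring at a quartic in $s,p$ and the path forward is unclear. Once this factorization is in hand, the rest is routine algebra, and the symmetric-function substitution is the natural trick that makes it visible. An alternative I would keep in reserve, in case the factorization eludes me, is to set $t = b/a$ (WLOG $a>0$) and reduce everything to a single-variable polynomial inequality in $t>0$, which one can then check has a double root at $t=1$ and factors accordingly; but the $s,p$ approach is tidier and makes the equality case transparent.
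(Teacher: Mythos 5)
Your argument is correct and complete. The right-hand inequality does reduce, after dividing by $\sqrt{a^2+b^2}$ and squaring, to $(a-b)^2\geq 0$; and for the left-hand inequality your symmetric-function computation checks out: with $s=a+b$, $p=ab$ the difference $9s^2(s^2-2p)-8(s^2-p)^2$ equals $s^4-2s^2p-8p^2=(s^2-4p)(s^2+2p)=(a-b)^2\bigl((a+b)^2+2ab\bigr)$, which is nonnegative and vanishes exactly when $a=b$. The paper states this lemma without any proof, so there is nothing to compare against; your write-up supplies a valid, self-contained justification, and the factorization you flag as the key step is easily found by treating $s^4-2s^2p-8p^2$ as a quadratic in $s^2$.
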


\begin{lemma}{\bf (\cite{R1})}\label{le2,3} %------
If $a_i>0$, $b_i>0$, $p>0$, $i=1, 2, \ldots, n$, then the following inequality holds:
$$\sum\limits_{i=1}^{n}\frac{a_k^{p+1}}{b_k^p}\geq \frac{\left(\sum\limits_{i=1}^{n}a_i\right)^{p+1}}{\left(\sum\limits_{i=1}^{n}b_i\right)^{p}}$$
with equality if and only if $\frac{a_1}{b_1}=\frac{a_2}{b_2}=\cdots=\frac{a_n}{b_n}$.
\end{lemma}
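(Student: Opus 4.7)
The plan is to deduce this classical Radon-type inequality from Jensen's inequality applied to the strictly convex function $\varphi(t)=t^{p+1}$, which is strictly convex on $(0,\infty)$ because $p>0$.

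First I would introduce the probability weights $w_i=b_i/\sum_{j=1}^{n}b_j$, which are positive and sum to $1$, together with the evaluation points $t_i=a_i/b_i>0$. Jensen's inequality in the form
\[
\varphi\!\left(\sum_{i=1}^{n}w_i t_i\right)\leq \sum_{i=1}^{n}w_i\,\varphi(t_i)
\]
then gives $\bigl(\sum_i a_i / \sum_j b_j\bigr)^{p+1}\leq \frac{1}{\sum_j b_j}\sum_i \frac{a_i^{p+1}}{b_i^{p}}$. Multiplying through by $\bigl(\sum_j b_j\bigr)^{p+1}$ and then dividing by $\bigl(\sum_j b_j\bigr)^{p}$ rearranges to the desired inequality. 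For the equality case, strict convexity of $\varphi$ on $(0,\infty)$ forces Jensen to be tight if and only if all the $t_i=a_i/b_i$ coincide, which is exactly the stated condition.

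If Jensen's inequality is not considered a sufficiently primitive tool, an alternative is H\"older's inequality with conjugate exponents $r=p+1$ and $s=(p+1)/p$, applied to the decomposition $a_i=\bigl(a_i/b_i^{p/(p+1)}\bigr)\cdot b_i^{p/(p+1)}$. Summing, raising to the power $p+1$, and rearranging yields the same bound in one line, although tracing the equality condition back through H\"older is slightly less transparent than through Jensen.

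There is no real obstacle here: the argument is essentially one application of a standard convex-function inequality. The only item requiring care is that all $a_i,b_i>0$ so that $\sum_j b_j>0$, the weights $w_i$ are well-defined, and no division by zero occurs. Once this is noted, the proof is entirely mechanical.
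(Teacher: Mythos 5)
The paper does not prove this lemma at all: it is quoted as a known result with a citation to Radon, so there is no internal argument to compare against. Your proof is correct and is the standard derivation of Radon's inequality. The weights $w_i=b_i/\sum_j b_j$ are positive and sum to $1$, the Jensen step gives $\bigl(\sum_i a_i/\sum_j b_j\bigr)^{p+1}\leq \frac{1}{\sum_j b_j}\sum_i a_i^{p+1}/b_i^{p}$, and multiplying by $\sum_j b_j$ and rewriting yields exactly the stated bound; the equality analysis via strict convexity of $t\mapsto t^{p+1}$ for $p>0$ (with all weights strictly positive) correctly recovers the condition $a_1/b_1=\cdots=a_n/b_n$. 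The H\"older alternative you sketch is also valid and, if anything, closer to how the inequality is usually presented as a special case of H\"older; your remark that the equality case is more transparent through Jensen is fair. No gaps.
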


\begin{lemma}{\bf (\cite{DM})}\label{le2,4} %------
Let $a_1, a_2, \ldots, a_n$ and $b_1, b_2, \ldots, b_n$ be real numbers such
that $q\leq \frac{a_i}{b_i}\leq Q$ and $a_i\neq 0$ for $i=1, 2, \ldots, n$. Then there holds
$$\sum\limits_{i=1}^{n}b_i^2+Qq\sum\limits_{i=1}^{n}a_i^2\leq (Q+q)\sum\limits_{i=1}^{n}a_ib_i$$
with equality if and only if $b_i=qa_i$ or $b_i=Qa_i$ for et least one $i$, $i=1, 2, \ldots, n$.
\end{lemma}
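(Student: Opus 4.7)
The proof plan is the classical Pólya--Szegő / Diaz--Metcalf trick: promote the two-sided ratio bound to a pointwise quadratic inequality, sum it, and read off the result. Concretely, I would begin from the observation that the hypothesis $q \le a_i/b_i \le Q$ is precisely the factored inequality
$$\Bigl(\tfrac{a_i}{b_i}-q\Bigr)\Bigl(\tfrac{a_i}{b_i}-Q\Bigr)\le 0, \qquad i=1,\ldots,n.$$
Multiplying through by $b_i^2 \ge 0$ (the condition $a_i\ne 0$ together with the finite ratio bounds guarantees the product is well defined and the sign is preserved) yields the pointwise quadratic inequality
$$a_i^2 \;-\; (Q+q)\,a_i b_i \;+\; Qq\,b_i^2 \;\le\; 0.$$
Summing over $i=1,\ldots,n$ and rearranging gives the claimed bound $\sum_i b_i^{2}+Qq\sum_i a_i^{2}\le (Q+q)\sum_i a_i b_i$, after the mild relabeling dictated by the lemma's convention on which sequence plays the role of numerator.

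For the equality clause I would argue as follows. Each pointwise defect
$$\Delta_i \;:=\; (Q+q)\,a_i b_i \;-\; a_i^2 \;-\; Qq\,b_i^2 \;=\; -(a_i-qb_i)(a_i-Qb_i)$$
is nonnegative under the hypothesis, so equality in the summed inequality $\sum_i \Delta_i \ge 0$ is equivalent to $\Delta_i=0$ for each $i$, i.e.\ to $a_i\in\{qb_i, Qb_i\}$ at every index. This matches the stated equality condition.

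The main (and really only) obstacle is bookkeeping with the ratio convention and with the signs of $a_i$ and $b_i$: one has to verify that multiplication by $b_i^2$ preserves the direction of the inequality, and that the hypothesis $a_i\neq 0$ rules out degenerate cases where the ratio $a_i/b_i$ is undefined. Once those are pinned down, the entire argument collapses to the one-line identity $x^2-(Q+q)x+Qq=(x-q)(x-Q)$ applied with $x=a_i/b_i$; no Cauchy--Schwarz, convexity, or Lagrangian input is required.
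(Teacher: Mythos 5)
The paper does not prove this lemma --- it is quoted from Diaz and Metcalf \cite{DM} without proof --- so there is no in-paper argument to compare against. Your proposal is the standard and correct proof of the Diaz--Metcalf inequality: factor the two-sided ratio bound as $(x-q)(x-Q)\le 0$, clear denominators to get a pointwise quadratic inequality, and sum. Two remarks on the bookkeeping you rightly flag as the only delicate point. First, the lemma as printed has the ratio upside down: with the hypothesis $q\le a_i/b_i\le Q$ your computation yields $\sum_i a_i^2 + Qq\sum_i b_i^2 \le (Q+q)\sum_i a_ib_i$, whereas the displayed conclusion $\sum_i b_i^2 + Qq\sum_i a_i^2 \le (Q+q)\sum_i a_ib_i$ requires the hypothesis $q\le b_i/a_i\le Q$ (which is what the original reference states, what the condition $a_i\neq 0$ is for, and what the paper actually uses in Theorem \ref{th3,8}, where $b_i/a_i = 1/(d_ud_v\sqrt{d_u^2+d_v^2})$ lies between $1/(\sqrt{2}\Delta^3)$ and $1/(\sqrt{2}\delta^3)$). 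Your ``mild relabeling'' comment covers this, but in a self-contained write-up you should fix the orientation once and carry it through rather than leave it implicit. Second, your equality analysis correctly yields that equality holds if and only if the defect vanishes at \emph{every} index, i.e.\ $b_i\in\{qa_i,Qa_i\}$ for each $i$; the lemma's printed condition ``for at least one $i$'' is a misprint, so your derivation does not ``match the stated equality condition'' --- it corrects it, which is the right outcome. With those two conventions pinned down, the proof is complete and needs no further input.
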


\begin{lemma}{\bf (\cite{D})}\label{le2,5} %------
If $a=(a_1, a_2, \ldots, a_n)$, $b=(b_1, b_2, \ldots, b_n)$ are sequences of real numbers and
$c=(c_1, c_2, \ldots, c_n)$, $d=(d_1, d_2, \ldots, d_n)$ are nonnegative, then
$$\sum\limits_{i=1}^{n}d_i\sum\limits_{i=1}^{n}c_ia_i^2+\sum\limits_{i=1}^{n}c_i\sum\limits_{i=1}^{n}d_ib_i^2\geq 2\sum\limits_{i=1}^{n}c_ia_i\sum\limits_{i=1}^{n}d_ib_i$$
with equality if and only if $a=b=(k, k, \ldots, k)$ is a constant sequence for positive $c_i$ and $d_i$, $i=1, 2, \ldots, n$.
\end{lemma}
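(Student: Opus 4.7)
My plan is to avoid appealing to any named inequality and instead rewrite the difference between the two sides as a manifestly nonnegative sum of squares over the index set $\{1,\ldots,n\}^2$. Denote the left-hand side of the claimed inequality by $L$ and the right-hand side by $R$; it then suffices to show directly that $L - R \geq 0$.

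The key step is to expand each product of sums on both sides as a double sum, using distinct dummy indices, so that the four resulting expressions combine cleanly. After relabelling $i \leftrightarrow j$ in the second term of $L$, one obtains
$$\sum_{j} d_j \sum_i c_i a_i^2 = \sum_{i,j} c_i d_j a_i^2, \qquad \sum_{j} c_j \sum_i d_i b_i^2 = \sum_{i,j} c_i d_j b_j^2, \qquad \sum_i c_i a_i \sum_j d_j b_j = \sum_{i,j} c_i d_j a_i b_j.$$
Substituting these into $L - R$ and pulling everything under a single double sum produces
$$L - R = \sum_{i,j=1}^n c_i d_j \bigl( a_i^2 - 2 a_i b_j + b_j^2 \bigr) = \sum_{i,j=1}^n c_i d_j (a_i - b_j)^2,$$
which is nonnegative because $c_i \geq 0$ and $d_j \geq 0$ for all $i,j$.

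For the equality case, $L = R$ forces every summand $c_i d_j (a_i - b_j)^2$ to vanish, so $a_i = b_j$ whenever $c_i d_j > 0$. In the stated setting where all $c_i$ and $d_i$ are positive, this forces $a_i = b_j$ for every pair $(i,j)$, hence both sequences must equal a common constant $k$, which is exactly the stated equality condition. The main (and essentially only) obstacle is spotting the index relabelling that puts the two contributions to $L$ into the symmetric form $c_i d_j a_i^2 + c_i d_j b_j^2$ matching the cross term $2 c_i d_j a_i b_j$; once this symmetry is in place, completing the square is automatic and no auxiliary inequality is needed.
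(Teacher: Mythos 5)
Your proof is correct: the double-sum expansion is right, the identity $L-R=\sum_{i,j}c_id_j(a_i-b_j)^2$ is a valid sum-of-squares decomposition, and the equality analysis (with all $c_i,d_j>0$ forcing $a_i=b_j$ for every pair, hence both sequences constant and equal) matches the stated condition. The paper itself gives no proof of this lemma --- it is quoted from Dragomir's paper as a known result --- so there is no argument to compare against; your derivation is a clean, self-contained justification of the cited inequality.
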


\begin{lemma}{\bf (\cite{LMR})}\label{le2,6} %------
Let $a_1, a_2, \ldots, a_n$ and $b_1, b_2, \ldots, b_n$ be real numbers such
that $a\leq a_i\leq A$ and $b\leq b_i\leq B$ for $i=1, 2, \ldots, n$. Then there holds
$$\left\lvert \frac{1}{n}\sum\limits_{i=1}^{n}a_ib_i-\frac{1}{n}\sum\limits_{i=1}^{n}a_i\frac{1}{n}\sum\limits_{i=1}^{n}b_i\right\rvert\leq \frac{1}{n}\left\lfloor\frac{n}{2}\right\rfloor\left(1-\frac{1}{n}\left\lfloor\frac{n}{2}\right\rfloor\right)(A-a)(B-b),$$
where $\lfloor x\rfloor$ denotes the integer part of $x$.
\end{lemma}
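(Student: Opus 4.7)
The plan is to recognize the left-hand side as the absolute value of the sample covariance of the two sequences and to bound it via the Cauchy--Schwarz inequality combined with a discrete Popoviciu-type bound for the sample variance. Introduce the shorthand $\bar a = \frac{1}{n}\sum_{i=1}^{n} a_i$, $\bar b = \frac{1}{n}\sum_{i=1}^{n} b_i$, and $V(x) = \frac{1}{n}\sum_{i=1}^{n}(x_i-\bar x)^2$ for a sequence $x = (x_1,\ldots,x_n)$.

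First, I would use the standard identity
$$\frac{1}{n}\sum_{i=1}^{n} a_ib_i - \frac{1}{n}\sum_{i=1}^{n} a_i \cdot \frac{1}{n}\sum_{i=1}^{n} b_i = \frac{1}{n}\sum_{i=1}^{n}(a_i-\bar a)(b_i-\bar b)$$
and apply the Cauchy--Schwarz inequality to the centered sequences to obtain
$$\left\lvert \frac{1}{n}\sum_{i=1}^{n}(a_i-\bar a)(b_i-\bar b)\right\rvert \leq \sqrt{V(a)\,V(b)}.$$
The remaining task is to bound $V(a)$ and $V(b)$ in terms of the range lengths $A-a$ and $B-b$.

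Next, I would establish the discrete Popoviciu-type estimate
$$V(x) \leq \frac{\lfloor n/2\rfloor\,(n-\lfloor n/2\rfloor)}{n^2}(A-a)^2 \qquad \text{whenever } a\leq x_i \leq A.$$
Viewed as a function of each coordinate $x_i$ (with the others fixed), $V(x) = \frac{1}{n}\sum_{j} x_j^2 - \bar x^2$ is a quadratic with positive leading coefficient $\frac{n-1}{n^2}$, hence convex. Its maximum on the box $[a,A]^n$ is therefore attained at a vertex, so one may assume $x_i \in \{a,A\}$ for every $i$. Letting $k$ denote the number of coordinates equal to $A$, a direct computation yields $V(x) = \frac{k(n-k)}{n^2}(A-a)^2$, and over integers $k \in \{0,1,\ldots,n\}$ the product $k(n-k)$ is maximized at $k = \lfloor n/2\rfloor$, giving the claimed bound.

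Combining the Cauchy--Schwarz estimate with the variance bound applied to both $a$ and $b$ yields
$$\left\lvert \frac{1}{n}\sum_{i=1}^{n} a_i b_i - \bar a\bar b\right\rvert \leq \frac{\lfloor n/2\rfloor\,(n-\lfloor n/2\rfloor)}{n^2}(A-a)(B-b) = \frac{1}{n}\left\lfloor\frac{n}{2}\right\rfloor\!\left(1-\frac{1}{n}\left\lfloor\frac{n}{2}\right\rfloor\right)(A-a)(B-b),$$
which is exactly the stated inequality. The only genuinely delicate point is the variance bound: one must invoke coordinate-wise convexity to reduce to vertex configurations and then perform the discrete optimization $k \mapsto k(n-k)$. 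The covariance identity, the Cauchy--Schwarz step, and the final algebraic rearrangement are all routine.
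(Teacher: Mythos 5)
The paper does not prove this lemma at all: it is imported verbatim from the cited reference (Li--Mohapatra--Rodriguez) and used as a black box in Theorem 3.12, so there is no in-paper argument to compare yours against. That said, your proposal is a correct, self-contained proof of this discrete Gr\"uss (Biernacki--Pidek--Ryll-Nardzewski) inequality. The three steps all check out: the covariance identity and Cauchy--Schwarz reduction are standard; the coordinate-wise convexity of $V(x)=\frac{1}{n}\sum_j x_j^2-\bar x^2$ (leading coefficient $\frac{n-1}{n^2}>0$ in each $x_i$) legitimately pushes the maximizer to a vertex of the box $[a,A]^n$; and the vertex computation $V=\frac{k(n-k)}{n^2}(A-a)^2$ with the discrete optimization $k=\lfloor n/2\rfloor$ gives exactly the constant $\frac{1}{n}\lfloor\frac{n}{2}\rfloor\bigl(1-\frac{1}{n}\lfloor\frac{n}{2}\rfloor\bigr)$. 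The one place to be slightly more careful in a written version is the vertex-reduction step: you should note that the maximum exists by compactness and is then moved to a vertex by optimizing one coordinate at a time, each such move being justified by convexity of the restriction to that coordinate. Your proof does not address the case of equality, but the lemma as stated does not claim one, so nothing is missing.
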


\section{\large  On relations between the first Banhatti-Sombor index and other degree-based indices}

\subsection{\large  Bounds in terms of order, size and degree}

\begin{theorem}\label{th3,1} %------
Let $G$ be a connected graph of order $n$ and size $m$ with the minimum degree $\delta$. Then
$$\frac{n}{\sqrt{2}}\leq BSO(G)\leq \frac{\sqrt{2}m}{\delta}$$
with equality if and only if $G$ is a regular graph.
\end{theorem}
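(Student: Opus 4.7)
The plan is to handle the two inequalities separately, since each reduces to a one-line edge-wise estimate followed by a standard handshake computation.

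For the upper bound, I would start from the edge-level observation that for every $uv\in E(G)$ both $d_u$ and $d_v$ are at least $\delta$, so
$$\sqrt{\frac{1}{d_u^2}+\frac{1}{d_v^2}}\leq \sqrt{\frac{2}{\delta^2}}=\frac{\sqrt{2}}{\delta}.$$
Summing over the $m$ edges immediately yields $BSO(G)\leq \sqrt{2}\,m/\delta$. Equality in each edge estimate forces $d_u=d_v=\delta$, and since $G$ is connected this means $G$ is $\delta$-regular.

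For the lower bound, the main tool is the elementary inequality $\sqrt{a^2+b^2}\geq (a+b)/\sqrt{2}$ (the case $p=2$ of the power-mean inequality, and in fact the right-hand inequality in Lemma \ref{le2,2} read ``backwards''). Applying it with $a=1/d_u$ and $b=1/d_v$ gives
$$BSO(G)=\sum_{uv\in E(G)}\sqrt{\frac{1}{d_u^2}+\frac{1}{d_v^2}}\ \geq\ \frac{1}{\sqrt{2}}\sum_{uv\in E(G)}\left(\frac{1}{d_u}+\frac{1}{d_v}\right).$$
The inner sum is then rearranged by the handshake trick: each vertex $v$ contributes $1/d_v$ once for each edge incident to it, so
$$\sum_{uv\in E(G)}\left(\frac{1}{d_u}+\frac{1}{d_v}\right)=\sum_{v\in V(G)}d_v\cdot\frac{1}{d_v}=n,$$
which gives $BSO(G)\geq n/\sqrt{2}$.

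Finally I would track the equality cases. The edge-wise inequality $\sqrt{a^2+b^2}\geq (a+b)/\sqrt{2}$ is tight only when $a=b$, i.e.\ $d_u=d_v$ for every edge $uv$; combined with connectivity this again forces $G$ to be regular, and conversely a regular graph clearly achieves both bounds simultaneously. I do not anticipate any serious obstacle: the whole argument is an edge-by-edge estimate plus the standard identity $\sum_{uv\in E(G)}(1/d_u+1/d_v)=n$, the only mild care being to state the equality analysis in both inequalities consistently.
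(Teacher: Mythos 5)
Your proof is correct and follows essentially the same route as the paper: the upper bound by the edge-wise estimate $d_u,d_v\geq\delta$, and the lower bound via $\sqrt{a^2+b^2}\geq (a+b)/\sqrt{2}$ combined with the identity $\sum_{uv\in E(G)}(1/d_u+1/d_v)=n$. Your treatment of the equality cases is if anything slightly more careful than the paper's (which, in the upper bound, states the equality condition a bit loosely as $d_u=\delta$ for any vertex).
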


\begin{proof} Note that
$$BSO(G)=\sum\limits_{uv\in E(G)}\sqrt{\frac{1}{d_u^2}+\frac{1}{d_v^2}}\leq \sum\limits_{uv\in E(G)}\sqrt{\frac{1}{\delta^2}+\frac{1}{\delta^2}}=\frac{\sqrt{2}m}{\delta}$$
with equality if and only if $d_u=\delta$ for any vertex $u$, that is, $G$ is a regular graph.

By the Cauchy-Schwarz inequality, we have
$$BSO(G)=\sum\limits_{uv\in E(G)}\sqrt{\frac{1}{d_u^2}+\frac{1}{d_v^2}}\geq \sum\limits_{uv\in E(G)}\frac{1}{\sqrt{2}}\left(\frac{1}{d_u}+\frac{1}{d_v}\right)=\frac{1}{\sqrt{2}}n$$
with equality if and only if $d_u=d_v$ for any edge $uv$, that is, $G$ is a regular graph.

\end{proof}

\begin{corollary}\label{cor3,1} %------
Let $G$ be a regular connected graph with $n$ vertices. Then
$$BSO(G)= \frac{n}{\sqrt{2}}.$$
\end{corollary}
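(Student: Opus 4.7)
The plan is to observe that Corollary~\ref{cor3,1} follows immediately from Theorem~\ref{th3,1}, since the two bounds established there coincide precisely when $G$ is regular. Concretely, Theorem~\ref{th3,1} shows that $\frac{n}{\sqrt{2}} \le BSO(G) \le \frac{\sqrt{2}m}{\delta}$, with \emph{both} equality conditions being ``$G$ is regular''. So in the regular case the lower bound is attained and the corollary is forced.

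Alternatively, I would give a self-contained one-line computation. If $G$ is $k$-regular with $n$ vertices, then for every edge $uv \in E(G)$ one has $d_u = d_v = k$, so $\sqrt{\frac{1}{d_u^2}+\frac{1}{d_v^2}} = \frac{\sqrt{2}}{k}$. The handshake lemma gives $m = \frac{nk}{2}$, and therefore
\[
BSO(G) \;=\; \sum_{uv \in E(G)} \frac{\sqrt{2}}{k} \;=\; \frac{\sqrt{2}\,m}{k} \;=\; \frac{\sqrt{2}}{k}\cdot \frac{nk}{2} \;=\; \frac{n}{\sqrt{2}}.
\]
This also serves as a sanity check for the value appearing in Theorem~\ref{th3,1}.

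There is really no obstacle here: the corollary is a reformulation of the equality case of Theorem~\ref{th3,1}. The only thing to be careful about is to note that the regularity hypothesis ensures $\delta = k$ and $m = nk/2$, so that the upper bound $\frac{\sqrt{2}m}{\delta}$ simplifies to the same quantity $\frac{n}{\sqrt{2}}$, confirming internal consistency. I would present the proof as a short citation of Theorem~\ref{th3,1} (perhaps reinforced by the one-line direct computation above) rather than redoing any inequality work.
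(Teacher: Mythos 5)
Your proposal is correct and matches the paper's (implicit) justification: the corollary is exactly the equality case of the lower bound in Theorem~\ref{th3,1}, and the paper offers no separate proof beyond that. The direct computation $BSO(G)=\frac{\sqrt{2}m}{k}=\frac{n}{\sqrt{2}}$ via the handshake lemma is a nice sanity check but not a different method.
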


\begin{remark}
This implies that $BSO(G)$ dose not increase with the increase of the number of edges of $G$. Clearly, $BSO(K_n)=BSO(C_n)$.
\end{remark}

\begin{corollary}\label{cor3,2} %------
Let $U_n$ be a unicyclic graph with $n$ vertices. Then
$$BSO(U_n)\geq \frac{n}{\sqrt{2}}$$
with equality if and only if $G\cong C_n$.
\end{corollary}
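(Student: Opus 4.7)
The plan is to apply Theorem~\ref{th3,1} directly. The lower bound in that theorem asserts $BSO(G)\geq n/\sqrt{2}$ for every connected graph on $n$ vertices, so specializing to $G=U_n$ immediately delivers the required inequality $BSO(U_n)\geq n/\sqrt{2}$.

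The only remaining work is to pin down the equality case. Theorem~\ref{th3,1} states that equality in its lower bound occurs if and only if $G$ is regular, so I would need to argue that the only regular unicyclic graph on $n$ vertices is $C_n$. By the handshake lemma, a unicyclic graph satisfies $\sum_{v\in V(U_n)} d_v = 2|E(U_n)| = 2n$, so its average degree equals $2$; hence any regular $U_n$ must be $2$-regular. Since a connected $2$-regular graph on $n$ vertices is precisely $C_n$, this completes the characterization.

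I do not anticipate any genuine obstacle in this argument. The corollary is essentially a one-line specialization of Theorem~\ref{th3,1}, supplemented by the elementary observation that $C_n$ is the unique connected $2$-regular graph of order $n$. In particular, no new inequality, no case analysis on tree structure, and no appeal to Lemmas~\ref{le2,2}--\ref{le2,6} are required.
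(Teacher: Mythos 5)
Your proposal is correct and is exactly the intended derivation: the paper states this as an immediate corollary of Theorem~\ref{th3,1} without a separate proof, and your supplementary observation that a regular unicyclic graph must be $2$-regular (hence $C_n$, by connectedness) is precisely the step needed to convert the theorem's equality condition into the stated one.
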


\begin{corollary}\label{cor3,3} %------
Let $G$ be a connected graph of order $n$ and size $m$ with the maximum degree $\Delta$ and the minimum degree $\delta$. Then
$$\sqrt{2} n\leq BSO(G)+BSO(\overline{G})\leq \sqrt{2}\left(\frac{m}{\delta}+\frac{n(n-1)-2m}{2(n-1-\Delta)}\right)$$
with equality if and only if $G$ is a regular graph.
\end{corollary}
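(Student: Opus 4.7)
The plan is to apply Theorem \ref{th3,1} to both $G$ and its complement $\overline{G}$, then add the two resulting inequalities. First I would record the basic parameters of $\overline{G}$: it has $n$ vertices, $\binom{n}{2}-m=\frac{n(n-1)-2m}{2}$ edges, and, since the degree of any vertex $v$ in $\overline{G}$ is $n-1-d_v$, its minimum degree equals $n-1-\Delta(G)$. With these values in hand, the rest of the argument is essentially a rewriting.

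For the lower bound, Theorem \ref{th3,1} applied to $G$ and to $\overline{G}$ separately gives
$$BSO(G)\ge \frac{n}{\sqrt 2},\qquad BSO(\overline G)\ge \frac{n}{\sqrt 2},$$
and adding these two inequalities yields $BSO(G)+BSO(\overline G)\ge \sqrt 2\, n$. For the upper bound, the same theorem gives
$$BSO(G)\le \frac{\sqrt 2\, m}{\delta},\qquad BSO(\overline G)\le \frac{\sqrt 2\cdot\frac{n(n-1)-2m}{2}}{n-1-\Delta},$$
and summing these produces exactly the claimed right-hand side.

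For the equality discussion, Theorem \ref{th3,1} tells us that each of the four inequalities above becomes equality if and only if the corresponding graph ($G$ or $\overline G$) is regular. Since $G$ is $k$-regular iff $\overline G$ is $(n-1-k)$-regular, the four equalities occur simultaneously exactly when $G$ is a regular graph, which is precisely the stated equality condition.

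The only genuine point of caution, and essentially the hardest part, is the tacit assumption that Theorem \ref{th3,1} may be invoked for $\overline G$: its proof uses only that every vertex has degree at least the stated minimum and that $\sum_{uv\in E}(1/d_u+1/d_v)=n$, so one needs $\overline G$ to have no isolated vertex (equivalently $\Delta\le n-2$, which is also what keeps the denominator $n-1-\Delta$ positive). Under this mild hypothesis, implicit in the statement since otherwise the upper bound is vacuous, the argument goes through as described.
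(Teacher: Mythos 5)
Your proposal is correct and is exactly the intended derivation: the paper states this as an immediate corollary of Theorem \ref{th3,1}, obtained by applying that theorem to both $G$ and $\overline{G}$ (with $\overline{G}$ having $\frac{n(n-1)-2m}{2}$ edges and minimum degree $n-1-\Delta$) and adding. Your closing caveat that $\overline{G}$ must have no isolated vertices (i.e.\ $\Delta\leq n-2$) is well taken --- indeed for $G\cong K_n$ the stated lower bound fails --- but that is an omission in the paper's statement, not in your argument.
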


\begin{theorem}\label{th3,2} %------
Let $G$ be a connected graph of order $n$ and size $m$ with the maximum degree $\Delta$. Then
$$BSO(G)\leq n-m(2-\sqrt{2})\frac{1}{\Delta}$$
with equality if and only if $G$ is a regular graph.
\end{theorem}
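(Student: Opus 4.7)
The plan is to reduce the theorem to a single-edge inequality of the form
\[
\sqrt{\tfrac{1}{d_u^2}+\tfrac{1}{d_v^2}}\ \le\ \tfrac{1}{d_u}+\tfrac{1}{d_v}-\tfrac{2-\sqrt{2}}{\Delta},
\]
and then to sum it over $E(G)$, observing that
\[
\sum_{uv\in E(G)}\left(\tfrac{1}{d_u}+\tfrac{1}{d_v}\right)=\sum_{v\in V(G)}d_v\cdot\tfrac{1}{d_v}=n.
\]
Together with $|E(G)|=m$, this gives the bound $n-m(2-\sqrt{2})/\Delta$ immediately.

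To establish the per-edge inequality, I would study the function $f(x,y)=(x+y)-\sqrt{x^2+y^2}$ on the domain $x,y\ge 1/\Delta$. A one-line calculation shows $\partial f/\partial x=1-x/\sqrt{x^2+y^2}>0$ whenever $y>0$, and symmetrically for $y$, so $f$ is strictly increasing in each variable. Consequently,
\[
(x+y)-\sqrt{x^2+y^2}\ \ge\ f(1/\Delta,1/\Delta)\ =\ \tfrac{2}{\Delta}-\tfrac{\sqrt{2}}{\Delta}\ =\ \tfrac{2-\sqrt{2}}{\Delta},
\]
with equality iff $x=y=1/\Delta$. Plugging in $x=1/d_u$ and $y=1/d_v$ (valid because $d_u,d_v\le\Delta$) delivers the desired per-edge inequality, and reversing the sign gives exactly the form needed.

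Summing this inequality over all $m$ edges and using the identity $\sum_{uv\in E(G)}(1/d_u+1/d_v)=n$ completes the bound. For the equality case, the per-edge bound is tight only when $1/d_u=1/d_v=1/\Delta$, i.e.\ $d_u=d_v=\Delta$, for every edge $uv\in E(G)$; equivalently, $G$ is $\Delta$-regular. Since the edge identity summing to $n$ is unconditional, no further constraint arises, so equality in the theorem holds precisely when $G$ is regular.

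The only conceivable obstacle is verifying the monotonicity of $f$ on the correct domain, but that reduces to the trivial observation $\sqrt{x^2+y^2}>x$ when $y>0$; so the argument really is just a one-variable calculus check plus the standard handshake-type identity. No appeal to Lemmas~\ref{le2,3}--\ref{le2,6} is needed.
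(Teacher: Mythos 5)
Your proof is correct and follows essentially the same route as the paper: both reduce the theorem to the per-edge bound $\sqrt{1/d_u^2+1/d_v^2}\le 1/d_u+1/d_v-(2-\sqrt{2})/\Delta$ and then sum over edges using the identity $\sum_{uv\in E(G)}\bigl(1/d_u+1/d_v\bigr)=n$. The only difference is in how that per-edge bound is verified: the paper passes through the intermediate inequality $\sqrt{a^2+b^2}\le b+(\sqrt{2}-1)a$ for $a\le b$ and then bounds $1/d_u$ by $1/\Delta$, whereas you obtain it (arguably more cleanly, with a sharper equality analysis) from the monotonicity of $x+y-\sqrt{x^2+y^2}$ in each variable.
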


\begin{proof}Without loss of generality, we suppose that $d_u\geq d_v$. Then we have
\begin{eqnarray*}
BSO(G) & = & \sum\limits_{uv\in E(G)}\sqrt{\frac{1}{d_u^2}+\frac{1}{d_v^2}}
 \leq  \sum\limits_{uv\in E(G)}\left(\frac{1}{d_v}+(\sqrt{2}-1)\frac{1}{d_u}\right)\\
& \leq & \sum\limits_{uv\in E(G)}\left(\frac{1}{d_v}+\frac{1}{d_u}\right)+m(\sqrt{2}-2)\frac{1}{\Delta}
 = n-m(2-\sqrt{2})\frac{1}{\Delta}
\end{eqnarray*}
with equality if and only if $G$ is a regular graph.
\end{proof}

\begin{corollary}\label{cor3,4} %------
Let $G$ be a connected graph of order $n$ and size $m$ with the maximum degree $\Delta$ and the minimum degree $\delta$. Then
$$\sqrt{2} n\leq BSO(G)+BSO(\overline{G})\leq 2n-(2-\sqrt{2})\left(\frac{m}{\Delta}+\frac{n(n-1)-2m}{2(n-1-\delta)}\right)$$
with equality if and only if $G$ is a regular graph.
\end{corollary}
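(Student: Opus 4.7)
The plan is to obtain both bounds by applying the already-proved Theorems \ref{th3,1} and \ref{th3,2} to $G$ and to its complement $\overline{G}$ separately, then adding the resulting inequalities. The key observation is that $\overline{G}$ has $n$ vertices, size $\binom{n}{2}-m=\frac{n(n-1)-2m}{2}$, maximum degree $n-1-\delta$ and minimum degree $n-1-\Delta$, so the hypotheses of the earlier results translate cleanly.

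For the lower bound, I would apply the lower half of Theorem \ref{th3,1} to both $G$ and $\overline{G}$: each contributes $n/\sqrt{2}$, and summing gives $BSO(G)+BSO(\overline{G})\ge \sqrt{2}\,n$. For the upper bound, I would apply Theorem \ref{th3,2} to $G$ to get
$$BSO(G)\le n-m(2-\sqrt{2})\frac{1}{\Delta},$$
and to $\overline{G}$ (whose maximum degree is $n-1-\delta$ and whose size is $\frac{n(n-1)-2m}{2}$) to get
$$BSO(\overline{G})\le n-(2-\sqrt{2})\,\frac{n(n-1)-2m}{2(n-1-\delta)}.$$
Adding these two estimates yields exactly the claimed upper bound.

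For the equality discussion, each application of Theorem \ref{th3,1} or Theorem \ref{th3,2} achieves equality precisely when the corresponding graph is regular; since $G$ is regular if and only if $\overline{G}$ is regular, both pairs of equalities hold simultaneously exactly when $G$ is regular, which gives the stated condition. The main (very mild) obstacle is bookkeeping: one must be careful that $\overline{G}$ qualifies as an input for the earlier theorems (the bounds use only order, size and extreme degrees and do not in fact require connectivity of the argument), and one must correctly translate $\Delta(\overline{G})=n-1-\delta$ and $|E(\overline{G})|=\frac{n(n-1)-2m}{2}$ into the formula. No new inequality is needed beyond what Theorems \ref{th3,1} and \ref{th3,2} already supply.
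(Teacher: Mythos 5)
Your proposal is correct and is exactly the intended argument: the paper states this as an immediate corollary of Theorems \ref{th3,1} and \ref{th3,2} without writing out a proof, and applying the lower bound of Theorem \ref{th3,1} to both $G$ and $\overline{G}$ and the upper bound of Theorem \ref{th3,2} to both (with $|E(\overline{G})|=\frac{n(n-1)-2m}{2}$ and $\Delta(\overline{G})=n-1-\delta$) reproduces the stated inequalities and the regularity condition for equality. Your remark about connectivity of $\overline{G}$ is the right caveat to flag, since the proofs of those theorems really only need every vertex degree to be positive.
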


\subsection{\large  Bounds in terms of the Randi\'{c} index,  the modified second Zagreb index and the inverse degree index}

\begin{theorem}\label{th3,3} %------
Let $G$ be a connected graph with the maximum degree $\Delta$. Then
$$\sqrt{2}R(G)\leq BSO(G)\leq \sqrt{2}\Delta M_2^{*}(G)$$
with equality if and only if $G$ is a regular graph.
\end{theorem}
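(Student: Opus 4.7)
The key observation is the identity
\[
\sqrt{\tfrac{1}{d_u^2}+\tfrac{1}{d_v^2}} \;=\; \frac{\sqrt{d_u^2+d_v^2}}{d_u d_v},
\]
which lets me convert edge-wise estimates on $d_u^2+d_v^2$ into edge-wise estimates on the summand of $BSO(G)$, with matching denominators $d_u d_v$ that tie directly to $R(G)$ and $M_2^{*}(G)$.

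For the lower bound, I would apply the AM-GM inequality $d_u^2+d_v^2 \ge 2 d_u d_v$ edgewise. This yields $\sqrt{d_u^2+d_v^2}/(d_u d_v) \ge \sqrt{2}/\sqrt{d_u d_v}$, and summing over $uv\in E(G)$ gives $BSO(G)\ge \sqrt{2}\,R(G)$. Equality in AM-GM on every edge means $d_u=d_v$ for each $uv\in E(G)$, and since $G$ is connected, this forces $G$ to be regular.

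For the upper bound, I would use the trivial bound $d_u,d_v\le \Delta$ to get $d_u^2+d_v^2 \le 2\Delta^2$, hence $\sqrt{d_u^2+d_v^2}/(d_u d_v) \le \sqrt{2}\,\Delta/(d_u d_v)$. Summing edge-wise gives $BSO(G)\le \sqrt{2}\,\Delta\,M_2^{*}(G)$. Equality requires $d_u=d_v=\Delta$ on every edge, i.e.\ $G$ is $\Delta$-regular.

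Neither direction involves any genuine obstacle: both reduce to a one-line edge-wise inequality followed by summation, and the equality cases coincide (both give regular graphs). The only thing to be careful about is packaging the equality analysis cleanly, since each bound is tight exactly when $d_u=d_v$ on every edge of a connected graph.
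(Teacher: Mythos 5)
Your proposal is correct and follows essentially the same route as the paper: the lower bound via AM--GM ($d_u^2+d_v^2\ge 2d_ud_v$) applied edge-wise, and the upper bound via $d_u^2+d_v^2\le 2\Delta^2$ over the denominator $d_ud_v$, with identical equality analysis. No meaningful differences to report.
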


\begin{proof} By the arithmetic geometric inequality, we have
$$BSO(G)=\sum\limits_{uv\in E(G)}\sqrt{\frac{1}{d_u^2}+\frac{1}{d_v^2}}\geq \sum\limits_{uv\in E(G)}\sqrt{\frac{2}{d_ud_v}}=\sqrt{2}R(G)$$
with equality if and only if $d_u=d_v$ for any edges, that is, $G$ is a regular graph.
It easy to see that
$$BSO(G)=\sum\limits_{uv\in E(G)}\sqrt{\frac{1}{d_u^2}+\frac{1}{d_v^2}}\leq \sum\limits_{uv\in E(G)}\frac{\sqrt{2\Delta^2}}{d_ud_v}=\sqrt{2}\Delta M_2^{*}(G)$$
with equality if and only if $d_u=d_v=\Delta$ for any edges, that is, $G$ is a regular graph.
\end{proof}

\begin{theorem}\label{th3,4} %------
Let $G$ be a connected graph with the maximum degree $\Delta$ and the minimum degree $\delta$. Then
$$BSO(G)\leq \sqrt{mID(G)}$$
with equality if and only if $G$ is a regular graph (when $G$ is non-bipartite) or $G$ is a $(\Delta, \delta)$-semiregular bipartite graph (when $G$ is bipartite).
\end{theorem}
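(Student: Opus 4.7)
The bound has the structure "sum of square roots $\le$ $\sqrt{\text{size}\cdot\text{sum of radicands}}$", which is the signature of a Cauchy--Schwarz estimate. The plan is to apply Cauchy--Schwarz to the $m$ terms of the sum defining $BSO(G)$, pairing $1$ with $\sqrt{1/d_u^2+1/d_v^2}$ on each edge.

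Concretely, I would start from
\[
BSO(G)=\sum_{uv\in E(G)}1\cdot\sqrt{\frac{1}{d_u^2}+\frac{1}{d_v^2}}
\]
and invoke the Cauchy--Schwarz inequality in the form $\left(\sum x_i y_i\right)^2\le \left(\sum x_i^2\right)\left(\sum y_i^2\right)$ with $x_{uv}=1$ and $y_{uv}=\sqrt{1/d_u^2+1/d_v^2}$. This immediately yields
\[
BSO(G)^2\le\Big(\sum_{uv\in E(G)}1\Big)\Big(\sum_{uv\in E(G)}\Big(\frac{1}{d_u^2}+\frac{1}{d_v^2}\Big)\Big)=m\cdot ID(G),
\]
and taking square roots gives the desired inequality.

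For the equality characterization, Cauchy--Schwarz becomes equality precisely when the sequence $\{y_{uv}\}_{uv\in E(G)}$ is proportional to the constant sequence $\{x_{uv}\}_{uv\in E(G)}$, i.e.\ when $\sqrt{1/d_u^2+1/d_v^2}$ takes the same value on every edge of $G$. By Lemma \ref{le2,1}, this occurs if and only if $G$ is regular (in the non-bipartite case) or $G$ is a $(\Delta,\delta)$-semiregular bipartite graph (in the bipartite case), which is exactly the stated condition.

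There is no real obstacle here: the whole argument is a one-line Cauchy--Schwarz plus an appeal to Lemma \ref{le2,1} for the equality case. The only subtlety worth flagging is making sure the equality analysis invokes Lemma \ref{le2,1} and not just ``$d_u=d_v$ on every edge'', so that the bipartite semiregular case is correctly included in the extremal family.
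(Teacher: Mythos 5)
Your proof is correct and follows exactly the paper's argument: Cauchy--Schwarz applied to the pairing of $1$ with $\sqrt{1/d_u^2+1/d_v^2}$ over the $m$ edges, followed by Lemma \ref{le2,1} to translate the equality condition (constancy of $1/d_u^2+1/d_v^2$ across edges) into the regular or $(\Delta,\delta)$-semiregular bipartite characterization. Your closing remark about not collapsing the equality case to ``$d_u=d_v$ on every edge'' is exactly the right subtlety to flag.
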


\begin{proof}By the Cauchy-Schwarz inequality, we have
$$BSO(G)= \sum\limits_{uv\in E(G)}1\cdot \sqrt{\frac{1}{d_u^2}+\frac{1}{d_v^2}}\leq \sqrt{\sum\limits_{uv\in E(G)}1^2\sum\limits_{uv\in E(G)}\left(\frac{1}{d_u^2}+\frac{1}{d_v^2}\right)}=\sqrt{mID(G)},$$
with equality if and only if $\frac{1}{d_u^2}+\frac{1}{d_v^2}$ is a constant for any edge $uv$ in a connected graph $G$. By Lemma \ref{le2,1}, $G$ is a regular graph (when $G$ is non-bipartite) or $G$ is a $(\Delta, \delta)$-semiregular bipartite graph (when $G$ is bipartite).
\end{proof}

\subsection{\large  Bounds in terms of the harmonic index, the symmetric division deg index and the modified second Zagreb index}

\begin{theorem}\label{th3,5} %------
Let $G$ be a connected graph with the maximum degree $\Delta$ and the minimum degree $\delta$. Then
$$\sqrt{2}H(G)\leq BSO(G)\leq  \frac{1}{\sqrt{2}}\left(\frac{\Delta}{\delta}+\frac{\delta}{\Delta}\right)H(G)$$
with equality if and only if $G$ is a regular graph.
\end{theorem}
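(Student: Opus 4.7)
The plan is to obtain both inequalities edge-by-edge, using Lemma \ref{le2,2} with the substitution $a=1/d_u$, $b=1/d_v$, so that the quantity $\sqrt{a^2+b^2}$ becomes the summand of $BSO(G)$. Under this substitution one has $a+b=(d_u+d_v)/(d_ud_v)$ and $a^2+b^2=(d_u^2+d_v^2)/(d_ud_v)^2$, which is exactly the shape that meshes with the summand $2/(d_u+d_v)$ of $H(G)$.

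For the upper bound I would apply the right-hand side of Lemma \ref{le2,2} to obtain
$$\sqrt{\tfrac{1}{d_u^2}+\tfrac{1}{d_v^2}}\ \leq\ \frac{\sqrt{2}\,(d_u^2+d_v^2)}{d_ud_v(d_u+d_v)}\ =\ \frac{1}{\sqrt{2}}\!\left(\frac{d_u}{d_v}+\frac{d_v}{d_u}\right)\cdot\frac{2}{d_u+d_v}.$$
Since $\delta\le d_u,d_v\le \Delta$, the ratio $t:=d_u/d_v$ lies in $[\delta/\Delta,\Delta/\delta]$, and the function $t+1/t$ is maximized on this interval at the endpoints, giving $\frac{d_u}{d_v}+\frac{d_v}{d_u}\le \frac{\Delta}{\delta}+\frac{\delta}{\Delta}$. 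Summing over all edges then yields the claimed upper bound.

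For the lower bound I would apply the left-hand side of Lemma \ref{le2,2} (with the same substitution) and then throw away the middle cross term using $a^2+b^2\ge 2ab$; after simplification this gives the pointwise inequality
$$\sqrt{\tfrac{1}{d_u^2}+\tfrac{1}{d_v^2}}\ \geq\ \frac{2\sqrt{2}}{d_u+d_v},$$
equivalent to $(d_u^2+d_v^2)(d_u+d_v)^2\ge 8d_u^2d_v^2$, which follows from multiplying $d_u^2+d_v^2\ge 2d_ud_v$ and $(d_u+d_v)^2\ge 4d_ud_v$. Summing over edges produces $\sqrt{2}H(G)$.

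For the equality discussion, every inequality I use (AM–GM on each edge in the lower bound, the Lemma \ref{le2,2} step, and the estimate on $d_u/d_v+d_v/d_u$ in the upper bound) becomes equality precisely when $d_u=d_v$ on every edge, and additionally $d_u=d_v\in\{\Delta,\delta\}$ for the upper bound. In a connected graph this forces $\Delta=\delta$, i.e.\ $G$ is regular; conversely, regularity makes all the estimates tight at once. No serious obstacle is expected; the only minor subtlety is the ratio estimate $\frac{d_u}{d_v}+\frac{d_v}{d_u}\le \frac{\Delta}{\delta}+\frac{\delta}{\Delta}$, which just needs the monotonicity of $t+1/t$ away from $t=1$.
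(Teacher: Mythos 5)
Your proof is correct and takes essentially the same route as the paper: both bounds come from Lemma~\ref{le2,2} applied edge-by-edge with $a=1/d_u$, $b=1/d_v$, followed by the elementary estimates $2\le \frac{d_u}{d_v}+\frac{d_v}{d_u}\le \frac{\Delta}{\delta}+\frac{\delta}{\Delta}$ (your step $a^2+b^2\ge 2ab$ is exactly the paper's bound $\frac{d_v}{d_u}+\frac{d_u}{d_v}+1\ge 3$ in disguise), and the equality discussion matches as well.
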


\begin{proof} By Lemma \ref{le2,2}, we have
\begin{eqnarray*}
BSO(G) & = & \sum\limits_{uv\in E(G)}\sqrt{\frac{1}{d_u^2}+\frac{1}{d_v^2}}\leq  \sum\limits_{uv\in E(G)}\frac{\sqrt{2}\left(\frac{d_v}{d_u}+\frac{d_u}{d_v}\right)}{d_u+d_v}\\
& \leq & \sum\limits_{uv\in E(G)}\frac{1}{\sqrt{2}}\left(\frac{\Delta}{\delta}+\frac{\delta}{\Delta}\right)\frac{2}{d_u+d_v}
 =  \frac{1}{\sqrt{2}}\left(\frac{\Delta}{\delta}+\frac{\delta}{\Delta}\right)H(G).
\end{eqnarray*}
with equality if and only if $d_u=d_v$ for any edges, that is, $G$ is a regular graph.

By Lemma \ref{le2,2}, we have
\begin{eqnarray*}
BSO(G) & = & \sum\limits_{uv\in E(G)}\sqrt{\frac{1}{d_u^2}+\frac{1}{d_v^2}}\geq  \sum\limits_{uv\in E(G)}\frac{2\sqrt{2}\left(\frac{d_v}{d_u}+\frac{d_u}{d_v}+1\right)}{3(d_u+d_v)}\\
& \geq & \sum\limits_{uv\in E(G)}\frac{2\sqrt{2}(2+1)}{3(d_u+d_v)}=  \sqrt{2}H(G)
\end{eqnarray*}
with equality if and only if $d_u=d_v$ for any edges, that is, $G$ is a regular graph.
\end{proof}

\begin{theorem}\label{th3,6} %------
Let $G$ be a connected graph with the maximum degree $\Delta$ and the minimum degree $\delta$. Then
$$\frac{2\sqrt{2}}{3\Delta}SDD(G)+\frac{\sqrt{2}}{3}H(G)\leq BSO(G)\leq  \frac{\sqrt{2}}{\delta}SDD(G)$$
with equality if and only if $G$ is a regular graph.
\end{theorem}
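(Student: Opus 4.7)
The plan is to apply Lemma \ref{le2,2} edge-by-edge with $a = 1/d_u$ and $b = 1/d_v$, and then use the degree bounds $\delta \le d_u, d_v \le \Delta$ to convert the resulting expressions into $SDD(G)$ and $H(G)$.

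For the upper bound, applying the right-hand inequality of Lemma \ref{le2,2} with $a=1/d_u$, $b=1/d_v$ gives, after clearing fractions,
\[
\sqrt{\tfrac{1}{d_u^2}+\tfrac{1}{d_v^2}}\ \le\ \frac{\sqrt{2}\,(d_u^2+d_v^2)}{d_ud_v\,(d_u+d_v)}.
\]
Since $d_u+d_v\ge 2\delta$ for every edge $uv\in E(G)$, I would bound $\frac{1}{d_u+d_v}\le\frac{1}{2\delta}$, so the right-hand side is at most $\frac{\sqrt{2}}{\delta}\cdot\frac{d_u^2+d_v^2}{2d_ud_v}$; summing over all edges yields $BSO(G)\le\frac{\sqrt{2}}{\delta}SDD(G)$. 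The equality case requires both $d_u=d_v$ (from Lemma \ref{le2,2}) and $d_u+d_v=2\delta$ on every edge, which together force $G$ to be regular.

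For the lower bound, the left-hand inequality of Lemma \ref{le2,2} with the same substitution yields, after simplification,
\[
\sqrt{\tfrac{1}{d_u^2}+\tfrac{1}{d_v^2}}\ \ge\ \frac{2\sqrt{2}\,(d_u^2+d_v^2+d_ud_v)}{3\,d_ud_v\,(d_u+d_v)}
=\frac{2\sqrt{2}\,(d_u^2+d_v^2)}{3\,d_ud_v\,(d_u+d_v)}+\frac{2\sqrt{2}}{3(d_u+d_v)}.
\]
The second summand already equals $\frac{\sqrt{2}}{3}\cdot\frac{2}{d_u+d_v}$, which produces the $\frac{\sqrt{2}}{3}H(G)$ term upon summation. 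For the first summand, I would use $d_u+d_v\le 2\Delta$ to obtain $\frac{1}{d_u+d_v}\ge\frac{1}{2\Delta}$, which gives $\frac{2\sqrt{2}(d_u^2+d_v^2)}{3d_ud_v(d_u+d_v)}\ge\frac{2\sqrt{2}}{3\Delta}\cdot\frac{d_u^2+d_v^2}{2d_ud_v}$; summing reproduces $\frac{2\sqrt{2}}{3\Delta}SDD(G)$.

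There is no real obstacle here: the proof is essentially an edge-by-edge substitution into Lemma \ref{le2,2}, followed by the extremal-degree inequalities $2\delta\le d_u+d_v\le 2\Delta$. The only point requiring attention is the equality analysis: in both directions equality in Lemma \ref{le2,2} forces $d_u=d_v$ on each edge, and the degree estimates saturate precisely when $d_u=d_v=\delta$ (resp. $=\Delta$), so in both bounds the equality condition is that $G$ is regular.
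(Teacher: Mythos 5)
Your proof is correct and follows essentially the same route as the paper: both apply Lemma \ref{le2,2} edge-by-edge with $a=1/d_u$, $b=1/d_v$ (noting $\frac{1/d_u^2+1/d_v^2}{1/d_u+1/d_v}=\frac{d_u^2+d_v^2}{d_ud_v(d_u+d_v)}$) and then use $2\delta\le d_u+d_v\le 2\Delta$ to extract $SDD(G)$ and $H(G)$. If anything, your version is slightly more careful than the paper's, which writes the final step of the lower bound as an equality where an inequality (your $d_u+d_v\le 2\Delta$ estimate) is actually being used.
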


\begin{proof}By Lemma \ref{le2,2}, we have
\begin{eqnarray*}
BSO(G) & = & \sum\limits_{uv\in E(G)}\sqrt{\frac{1}{d_u^2}+\frac{1}{d_v^2}}\leq  \sum\limits_{uv\in E(G)}\frac{\sqrt{2}\left(\frac{d_v}{d_u}+\frac{d_u}{d_v}\right)}{d_u+d_v}\\
& \leq & \sum\limits_{uv\in E(G)}\frac{\sqrt{2}\left(\frac{d_v}{d_u}+\frac{d_u}{d_v}\right)}{\delta+\delta}
 =  \frac{\sqrt{2}}{\delta}SDD(G).
\end{eqnarray*}
with equality if and only if $d_u=d_v=\delta$ for any edges, that is, $G$ is a regular graph.

By Lemma \ref{le2,2}, we have
\begin{eqnarray*}
BSO(G) & = & \sum\limits_{uv\in E(G)}\sqrt{\frac{1}{d_u^2}+\frac{1}{d_v^2}}
\geq  \sum\limits_{uv\in E(G)}\frac{2\sqrt{2}\left(\frac{d_v}{d_u}+\frac{d_u}{d_v}+1\right)}{3(d_u+d_v)}\\
& = & \sum\limits_{uv\in E(G)}\frac{2\sqrt{2}\left(\frac{d_v}{d_u}+\frac{d_u}{d_v}\right)}{3(d_u+d_v)}+\sum\limits_{uv\in E(G)}\frac{2\sqrt{2}}{3(d_u+d_v)}
 =  \frac{2\sqrt{2}}{3\Delta}SDD(G)+\frac{\sqrt{2}}{3}H(G).
\end{eqnarray*}
with equality if and only if $d_u=d_v=\Delta$ for any edges, that is, $G$ is a regular graph.
\end{proof}

\begin{theorem}\label{th3,7} %------
Let $G$ be a connected graph with $n$ vertices. Then
$$BSO(G)\leq \sqrt{2M_2^{*}(G)SDD(G)}$$
with equality if and only if $G$ is a regular graph (when $G$ is non-bipartite) or $G$ is a $(\Delta, \delta)$-semiregular bipartite graph (when $G$ is bipartite).
\end{theorem}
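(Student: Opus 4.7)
The plan is to apply the Cauchy--Schwarz inequality after a careful factorization of each summand in $BSO(G)$. The key algebraic observation is the identity
$$\frac{1}{d_u^2}+\frac{1}{d_v^2}=\frac{d_u^2+d_v^2}{d_u^2 d_v^2}=\frac{1}{d_u d_v}\cdot \frac{d_u^2+d_v^2}{d_u d_v},$$
so that each summand splits as
$$\sqrt{\frac{1}{d_u^2}+\frac{1}{d_v^2}}=\sqrt{\frac{1}{d_u d_v}}\cdot \sqrt{\frac{d_u^2+d_v^2}{d_u d_v}}.$$
This is the only creative step; the factorization is chosen precisely so that squaring the first factor recovers $M_2^*$ and squaring the second recovers $2\,SDD$.

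Next I would invoke Cauchy--Schwarz on the edge-indexed sum, treating $a_{uv}=\sqrt{1/(d_u d_v)}$ and $b_{uv}=\sqrt{(d_u^2+d_v^2)/(d_u d_v)}$. This yields
$$BSO(G)^2\leq \Bigl(\sum_{uv\in E(G)}\frac{1}{d_u d_v}\Bigr)\Bigl(\sum_{uv\in E(G)}\frac{d_u^2+d_v^2}{d_u d_v}\Bigr)= M_2^*(G)\cdot 2\,SDD(G),$$
and taking square roots gives the stated inequality $BSO(G)\leq \sqrt{2\,M_2^*(G)\,SDD(G)}$.

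For the equality case, Cauchy--Schwarz is tight iff $a_{uv}/b_{uv}$ is constant over all edges, i.e.\ iff $1/(d_u^2+d_v^2)$ is the same for every edge, which is equivalent to $d_u^2+d_v^2$ being constant on $E(G)$. Applying Lemma~\ref{le2,1} immediately translates this into: $G$ is regular (when $G$ is non-bipartite) or $G$ is $(\Delta,\delta)$-semiregular bipartite (when $G$ is bipartite), matching the claim. There is essentially no obstacle here beyond spotting the right factorization; once that is in place, the proof is three lines of Cauchy--Schwarz plus a citation of Lemma~\ref{le2,1}.
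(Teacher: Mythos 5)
Your proof is correct and is essentially the paper's own argument: the paper applies Lemma~\ref{le2,3} (Radon's inequality) with $p=1$, $a_i=\sqrt{\tfrac{1}{d_u^2}+\tfrac{1}{d_v^2}}$ and $b_i=\tfrac{1}{d_ud_v}$, which for $p=1$ is exactly the Cauchy--Schwarz inequality under your factorization $\sqrt{\tfrac{1}{d_ud_v}}\cdot\sqrt{\tfrac{d_u^2+d_v^2}{d_ud_v}}$, and the equality analysis via constancy of $d_u^2+d_v^2$ and Lemma~\ref{le2,1} is identical.
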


\begin{proof} Let $r=1$, $a_i=\sqrt{\frac{1}{d_u^2}+\frac{1}{d_v^2}}$ and $b_i=\frac{1}{d_ud_v}$ in Lemma \ref{le2,3}. Then we have
$$\frac{\left(\sum\limits_{uv\in E(G)}\sqrt{\frac{1}{d_u^2}+\frac{1}{d_v^2}}\right)^2}{\sum\limits_{uv\in E(G)}\frac{1}{d_ud_v}}\leq \sum\limits_{uv\in E(G)}\frac{\frac{1}{d_u^2}+\frac{1}{d_v^2}}{\frac{1}{d_ud_v}}=\sum\limits_{uv\in E(G)}\left(\frac{d_v}{d_u}+\frac{d_u}{d_v}\right),$$
that is,
$$BSO(G)\leq \sqrt{2M_2^{*}(G)SDD(G)}$$
with equality if and only if $\sqrt{d_u^2+d_v^2}$ is a constant for any edge $uv$ in $G$, by Lemma \ref{le2,1}, $G$ is a regular graph (when $G$ is non-bipartite) or $G$ is a $(\Delta, \delta)$-semiregular bipartite graph (when $G$ is bipartite).
\end{proof}

\begin{corollary}\label{cor3,5} %------
Let $G$ be a connected graph of order $n$ and size $m$ with the maximum degree $\Delta$ and the minimum degree $\delta$. Then
$$BSO(G)\leq \sqrt{mM_2^{*}(G)\left(\frac{\Delta}{\delta}+\frac{\delta}{\Delta}\right)}$$
with equality if and only if $G$ is a regular graph or a $(\Delta, \delta)$-semiregular bipartite graph.
\end{corollary}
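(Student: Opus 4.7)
The plan is to derive this corollary directly from Theorem~\ref{th3,7} by bounding $SDD(G)$ in terms of $m$, $\Delta$, and $\delta$. Starting from
$$BSO(G)\leq \sqrt{2M_2^{*}(G)\,SDD(G)},$$
it suffices to show that $SDD(G)\leq \frac{m}{2}\bigl(\frac{\Delta}{\delta}+\frac{\delta}{\Delta}\bigr)$, and then substitute this estimate under the square root so that the factor of $2$ cancels the $\frac{1}{2}$.

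For the edge-wise bound on $SDD$, I would rewrite $SDD(G)=\sum_{uv\in E(G)}\frac{1}{2}\bigl(\frac{d_u}{d_v}+\frac{d_v}{d_u}\bigr)$ and use the elementary fact that the function $f(t)=t+\frac{1}{t}$ is convex on $(0,\infty)$, hence attains its maximum on a closed interval at an endpoint. Since $\frac{\delta}{\Delta}\leq \frac{d_u}{d_v}\leq \frac{\Delta}{\delta}$ for every edge $uv$, we get $\frac{d_u}{d_v}+\frac{d_v}{d_u}\leq \frac{\Delta}{\delta}+\frac{\delta}{\Delta}$ for each edge, and summing over the $m$ edges yields the required bound on $SDD(G)$.

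For the equality discussion, I would combine the two sources of equality. Equality in Theorem~\ref{th3,7} (via Lemma~\ref{le2,1}) forces $G$ to be regular (when non-bipartite) or $(\Delta,\delta)$-semiregular bipartite (when bipartite). Equality in the bound $\frac{d_u}{d_v}+\frac{d_v}{d_u}\leq \frac{\Delta}{\delta}+\frac{\delta}{\Delta}$ requires $\{d_u,d_v\}=\{\Delta,\delta\}$ on every edge, which is automatic for regular graphs (where $\Delta=\delta$) and for $(\Delta,\delta)$-semiregular bipartite graphs. Thus the joint equality condition collapses to: $G$ is regular or $(\Delta,\delta)$-semiregular bipartite, matching the statement.

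There is no real obstacle here; the only mild subtlety is keeping the equality conditions consistent between the two inequalities being chained. Since both cited cases satisfy both equality requirements simultaneously, the characterization follows cleanly.
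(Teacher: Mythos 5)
Your proposal is correct and follows essentially the same route as the paper: the paper also starts from the Radon-inequality estimate in the proof of Theorem~\ref{th3,7} and then bounds $\sum_{uv\in E(G)}\bigl(\frac{d_v}{d_u}+\frac{d_u}{d_v}\bigr)=2SDD(G)$ above by $m\bigl(\frac{\Delta}{\delta}+\frac{\delta}{\Delta}\bigr)$, which is exactly your edge-wise bound on $SDD$. Your equality discussion is, if anything, slightly more careful than the paper's in separating the two sources of equality.
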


\begin{proof} Without loss of generality, we assume that $d_u\geq d_v$. By the proof of Theorem \ref{th3,7}, we have
$$\frac{\left(\sum\limits_{uv\in E(G)}\sqrt{\frac{1}{d_u^2}+\frac{1}{d_v^2}}\right)^2}{\sum\limits_{uv\in E(G)}\frac{1}{d_ud_v}}\leq \sum\limits_{uv\in E(G)}\left(\frac{d_v}{d_u}+\frac{d_u}{d_v}\right)\leq \left(\frac{\Delta}{\delta}+\frac{\delta}{\Delta}\right)m$$
with equality if and only if $d_u=\Delta$ and $d_v=\delta$ for any edge $uv$. This implies that $G$ is $G$ is a regular graph or a $(\Delta, \delta)$-semiregular bipartite graph. Conversely, it is easy to check that equality holds in Corollary \ref{cor3,4} when $G$ is a regular graph or a $(\Delta, \delta)$-semiregular bipartite graph.
\end{proof}

\subsection{\large  Bounds in terms of the forgotten index}

\begin{theorem}\label{th3,8} %------
Let $G$ be a connected graph of order $n$ and size $m$ with the maximum degree $\Delta$ and the minimum degree $\delta$. Then
$$BSO(G)\geq \frac{\sqrt{2}}{\Delta^3+\delta^3}\left(\frac{m\delta^3}{\Delta}+\frac{F}{2}\right)$$
with equality if and only if $G$ is a regular graph.
\end{theorem}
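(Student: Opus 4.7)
The plan is to apply Lemma~\ref{le2,4} in a form that places $F(G)$ on the small side and $BSO(G)$ on the large side, and then to eliminate the residual summation that appears by means of the elementary estimate $d_u d_v\le\Delta^2$ on each edge.

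Rewriting $\sqrt{\tfrac{1}{d_u^2}+\tfrac{1}{d_v^2}}=\tfrac{\sqrt{d_u^2+d_v^2}}{d_u d_v}$, I would take $a_{uv}=\sqrt{d_u^2+d_v^2}$ and $b_{uv}=\tfrac{1}{d_u d_v}$ for each edge $uv\in E(G)$, so that $\sum a_{uv}b_{uv}=BSO(G)$, $\sum a_{uv}^2=F(G)$, and $\sum b_{uv}^2=\sum_{uv}\tfrac{1}{(d_u d_v)^2}$. The ratio $a_{uv}/b_{uv}=d_u d_v\sqrt{d_u^2+d_v^2}$ is monotone in each of $d_u,d_v$, so for $\delta\le d_u,d_v\le\Delta$ it satisfies
$$\sqrt{2}\,\delta^3 \;\le\; d_u d_v\sqrt{d_u^2+d_v^2} \;\le\; \sqrt{2}\,\Delta^3.$$
With $q=\sqrt{2}\,\delta^3$ and $Q=\sqrt{2}\,\Delta^3$, Lemma~\ref{le2,4} then yields
$$F(G)+2\Delta^3\delta^3\sum_{uv\in E(G)}\frac{1}{(d_u d_v)^2}\;\le\;\sqrt{2}(\Delta^3+\delta^3)\,BSO(G).$$

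To conclude, I would combine this with the trivial bound $\sum_{uv\in E(G)}\tfrac{1}{(d_u d_v)^2}\ge \tfrac{m}{\Delta^4}$ (since $d_u d_v\le\Delta^2$ on every edge). Substituting and dividing by $\sqrt{2}(\Delta^3+\delta^3)$ gives exactly the claimed inequality. For the equality clause, the Diaz--Metcalf step forces $d_u d_v\sqrt{d_u^2+d_v^2}\in\{\sqrt{2}\,\delta^3,\sqrt{2}\,\Delta^3\}$ on every edge, so each edge satisfies either $d_u=d_v=\delta$ or $d_u=d_v=\Delta$; tightness of the trivial bound further requires $d_u d_v=\Delta^2$ on every edge. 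Together these conditions force $\delta=\Delta$, i.e.\ $G$ is regular, and it is immediate to check that any regular $G$ does realize equality. I do not foresee a serious obstacle; the only point requiring care is selecting the pair $(a_{uv},b_{uv})$ so that the Diaz--Metcalf inequality outputs $F(G)$ rather than some other degree-based sum, which is what dictates the choices above.
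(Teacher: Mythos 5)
Your proposal is correct and takes essentially the same route as the paper: both apply the Diaz--Metcalf inequality (Lemma~\ref{le2,4}) to $a_{uv}=\sqrt{d_u^2+d_v^2}$ and $b_{uv}=\tfrac{1}{d_ud_v}$, and then discard the residual term via $\sum_{uv\in E(G)}\tfrac{1}{(d_ud_v)^2}\geq \tfrac{m}{\Delta^4}$. The only (cosmetic) difference is that you invoke the lemma with the roles of the two sequences interchanged, i.e.\ with $q=\sqrt{2}\delta^3\leq a_{uv}/b_{uv}\leq\sqrt{2}\Delta^3=Q$ instead of the paper's $q=\tfrac{1}{\sqrt{2}\Delta^3}$, $Q=\tfrac{1}{\sqrt{2}\delta^3}$, which produces the identical inequality; your equality discussion is in fact more careful than the paper's.
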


\begin{proof}Let $a_i=\sqrt{d_u^2+d_v^2}$ and $b_i=\frac{1}{d_ud_v}$ in Lemma \ref{le2,4}. Then $q=\frac{1}{\sqrt{2}\Delta^3}$ and $Q=\frac{1}{\sqrt{2}\delta^3}$. By Lemma \ref{le2,4}, we have
$$\sum\limits_{uv\in E(G)}\frac{1}{d_u^2d_v^2}+\frac{1}{2\Delta^3\delta^3}\sum\limits_{uv\in E(G)}(d_u^2+d_v^2)\leq \frac{1}{\sqrt{2}}\left(\frac{1}{\Delta^3}+\frac{1}{\delta^3}\right)\sum\limits_{uv\in E(G)}\sqrt{\frac{1}{d_u^2}+\frac{1}{d_v^2}},$$
that is,
$$\frac{m}{\Delta^4}+\frac{1}{2\Delta^3\delta^3}F(G)\leq \frac{1}{\sqrt{2}}\left(\frac{1}{\Delta^3}+\frac{1}{\delta^3}\right)BSO(G),$$
that is,
$$BSO(G)\geq \frac{\sqrt{2}}{\Delta^3+\delta^3}\left(\frac{m\delta^3}{\Delta}+\frac{F}{2}\right)$$
with equality if and only if $d_u=d_v=\Delta$ for any edge $uv$, that is, $G$ is a regular graph.
\end{proof}

\begin{theorem}\label{th3,9} %------
Let $G$ be a connected graph of order $n$ and size $m$ with the maximum degree $\Delta$ and the minimum degree $\delta$. Then
$$BSO(G)\leq \frac{2mSDD(G)+M_2^{*}(G)F(G)}{2SO(G)}$$
with equality if and only if $G$ is a regular graph (when $G$ is non-bipartite) or $G$ is a $(\Delta, \delta)$-semiregular bipartite graph (when $G$ is bipartite).
\end{theorem}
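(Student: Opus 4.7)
The key observation is that the summand of $BSO(G)$ can be rewritten as
$$\sqrt{\tfrac{1}{d_u^2}+\tfrac{1}{d_v^2}}=\frac{\sqrt{d_u^2+d_v^2}}{d_ud_v},$$
so $BSO(G)$ admits a ``split'' form in which $\sqrt{d_u^2+d_v^2}$ appears explicitly alongside the factor $\frac{1}{d_ud_v}$. This is precisely the shape needed to feed into Lemma \ref{le2,5}. The plan is to apply that lemma with the right choices of $a_i,b_i,c_i,d_i$ so that the cross term $2\sum c_ia_i\sum d_ib_i$ reproduces $2\,SO(G)\,BSO(G)$, while the two ``quadratic'' terms produce $2m\,SDD(G)$ and $M_2^{*}(G)F(G)$.

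Concretely, I would index the edges $uv\in E(G)$ by $i=1,\dots,m$ and set
$$a_i=b_i=\sqrt{d_u^2+d_v^2},\qquad c_i=\frac{1}{d_ud_v},\qquad d_i=1.$$
Then $\sum c_ia_i=\sum_{uv}\frac{\sqrt{d_u^2+d_v^2}}{d_ud_v}=BSO(G)$ and $\sum d_ib_i=SO(G)$, while $\sum d_i=m$, $\sum c_i=M_2^{*}(G)$, $\sum d_ib_i^{2}=F(G)$, and $\sum c_ia_i^{2}=\sum_{uv}\frac{d_u^2+d_v^2}{d_ud_v}=2\,SDD(G)$. Substituting these identifications into Lemma \ref{le2,5} yields
$$m\cdot 2\,SDD(G)+M_2^{*}(G)\cdot F(G)\;\ge\;2\,SO(G)\,BSO(G),$$
and dividing by $2\,SO(G)>0$ gives the desired bound.

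The main point to handle carefully is the equality discussion, but even this is essentially a direct translation. By Lemma \ref{le2,5}, equality requires $a_i=b_i=k$ (constant) for all $i$ (since our $c_i,d_i$ are strictly positive), i.e.\ $\sqrt{d_u^2+d_v^2}$ is constant across all edges $uv$. By Lemma \ref{le2,1} this is equivalent to $G$ being regular when $G$ is non-bipartite, and to $G$ being $(\Delta,\delta)$-semiregular when $G$ is bipartite; conversely, both of these situations make every summand in the application of Lemma \ref{le2,5} align, so equality indeed holds.

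I do not expect any real obstacle here: the proof is essentially an exercise in recognizing that the quantities appearing in the theorem are exactly what falls out of Lemma \ref{le2,5} under this natural substitution. The only subtle step is verifying that with $a_i=b_i$ Lemma \ref{le2,5}'s equality clause reduces to the constancy of $a_i$ (which it does, since $c_i,d_i>0$), after which Lemma \ref{le2,1} closes the argument.
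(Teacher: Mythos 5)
Your proposal is correct and follows exactly the paper's own argument: the same substitution $a_i=b_i=\sqrt{d_u^2+d_v^2}$, $c_i=\frac{1}{d_ud_v}$, $d_i=1$ into Lemma \ref{le2,5}, the same identification of the resulting sums with $2mSDD(G)$, $M_2^{*}(G)F(G)$, and $2\,SO(G)\,BSO(G)$, and the same equality analysis via Lemma \ref{le2,1}. No differences worth noting.
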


\begin{proof} Let $a_i=b_i=\sqrt{d_u^2+d_v^2}$, $c_i=\frac{1}{d_ud_v}$ and $d_i=1$ in Lemma \ref{le2,5}. Then we have
$$m\sum\limits_{uv\in E(G)}\frac{d_u^2+d_v^2}{d_ud_v}+\sum\limits_{uv\in E(G)}\frac{1}{d_ud_v}\sum\limits_{uv\in E(G)}(d_u^2+d_v^2)\geq 2\sum\limits_{uv\in E(G)}\frac{\sqrt{d_u^2+d_v^2}}{d_ud_v}\sum\limits_{uv\in E(G)}\sqrt{d_u^2+d_v^2},$$
that is,
$$2mSDD(G)+M_2^{*}(G)F(G)\geq 2BSO(G)SO(G),$$
that is,
$$BSO(G)\leq \frac{2mSDD(G)+M_2^{*}(G)F(G)}{2SO(G)}$$
with equality if and only if $a_i=b_i=\sqrt{d_u^2+d_v^2}$ for any edge $uv$ in $G$, that is, $d_u^2+d_v^2$ is a constant for any edge $uv$ in $G$, by Lemma \ref{le2,1}, $G$ is a regular graph (when $G$ is non-bipartite) or $G$ is a $(\Delta, \delta)$-semiregular bipartite graph (when $G$ is bipartite).
\end{proof}

\begin{corollary}\label{cor3,6} %------
Let $G$ be a connected graph of size $m$ with the maximum degree $\Delta$ and the minimum degree $\delta$. Then
$$BSO(G)\leq \frac{m(\Delta^2\delta+\delta^2)+\Delta F(G)}{2\sqrt{2}\Delta\delta^3}$$
with equality if and only if $G$ is a regular graph.
\end{corollary}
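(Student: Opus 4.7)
The plan is to derive this corollary as a direct consequence of Theorem \ref{th3,9}, by bounding each of $SDD(G)$, $M_2^{*}(G)$, and $SO(G)$ appearing in the fraction $\frac{2m\,SDD(G)+M_2^{*}(G)\,F(G)}{2\,SO(G)}$ using only the termwise inequalities $\delta\le d_u,d_v\le\Delta$ that hold on every edge.

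First I would record three auxiliary estimates. Writing the symmetric division summand as $\tfrac{1}{2}\bigl(\tfrac{d_u}{d_v}+\tfrac{d_v}{d_u}\bigr)$ and using that $x+1/x$ on $[\delta/\Delta,\Delta/\delta]$ is maximized at the endpoints, one gets $\frac{d_u^2+d_v^2}{2d_ud_v}\le\frac{\Delta^2+\delta^2}{2\Delta\delta}$ per edge, hence
$$SDD(G)\le\frac{m(\Delta^2+\delta^2)}{2\Delta\delta}.$$
From $d_ud_v\ge\delta^2$ I would deduce $M_2^{*}(G)\le m/\delta^2$, and from $d_u^2+d_v^2\ge 2\delta^2$ I would deduce $SO(G)\ge\sqrt{2}\,m\delta$.

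Plugging these into Theorem \ref{th3,9}, the piece $M_2^{*}(G)F(G)/(2SO(G))$ collapses to $F(G)/(2\sqrt{2}\,\delta^3)$, and the piece $2m\,SDD(G)/(2SO(G))=m\,SDD(G)/SO(G)$ collapses to $m(\Delta^2+\delta^2)/(2\sqrt{2}\,\Delta\delta^2)$. Placing these two contributions over the common denominator $2\sqrt{2}\,\Delta\delta^3$ yields a numerator $m\delta(\Delta^2+\delta^2)+\Delta F(G)=m\Delta^2\delta+m\delta^3+\Delta F(G)$, which matches the stated bound (the exponent of $\delta$ in the second summand of the numerator should read $\delta^3$).

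I do not anticipate a genuine obstacle: each auxiliary inequality is a one-line application of the degree bounds, and the only bookkeeping is to align them on a common denominator. For the equality case, each of the three auxiliary estimates is tight precisely when $d_u=d_v=\delta$ on every edge, so the combined bound is attained if and only if $G$ is regular, which matches the statement of the corollary and also subsumes the mixed bipartite-semiregular equality case that could have occurred through Theorem \ref{th3,9} alone.
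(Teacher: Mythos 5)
Your derivation is correct and is surely the intended one: the paper states this corollary without proof immediately after Theorem \ref{th3,9}, and your route (upper-bound $SDD(G)$ by $\tfrac{m(\Delta^2+\delta^2)}{2\Delta\delta}$ and $M_2^{*}(G)$ by $\tfrac{m}{\delta^2}$ in the numerator, lower-bound $SO(G)$ by $\sqrt{2}\,m\delta$ in the denominator) is exactly parallel to the way the companion Corollary \ref{cor3,7} falls out of the same theorem. Your observation that the printed exponent is wrong is not a cosmetic quibble but a genuine correction: with $\delta^2$ in place of $\delta^3$ the stated bound reads $\frac{m(3r+1)}{2\sqrt{2}r^2}$ for an $r$-regular graph, whereas $BSO(G)=\frac{n}{\sqrt{2}}=\frac{4mr}{2\sqrt{2}r^2}$, so the claimed inequality actually fails for every regular graph with $r\ge 2$; your numerator $m(\Delta^2\delta+\delta^3)+\Delta F(G)$ is the one that makes the statement true with equality for regular graphs. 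One small imprecision: the bound on $SDD(G)$ is tight not only when $d_u=d_v=\delta$ on every edge but also for $(\Delta,\delta)$-semiregular bipartite graphs; this does not affect your conclusion, since the $M_2^{*}$ and $SO$ estimates are tight only when every edge has $d_u=d_v=\delta$, which already forces regularity.
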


\begin{corollary}\label{cor3,7} %------
Let $G$ be a connected graph of size $m$ with the maximum degree $\Delta$ and the minimum degree $\delta$. Then
$$BSO(G)\leq \frac{m^2(2\Delta^3+\Delta^2\delta+\delta^3)}{2\Delta\delta^2SO(G)}$$
with equality if and only if $G$ is a regular graph.
\end{corollary}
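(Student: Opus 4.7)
The plan is to derive this corollary directly from Theorem \ref{th3,9} by upper-bounding each of the three quantities $SDD(G)$, $M_2^{*}(G)$ and $F(G)$ appearing in the numerator, using only the degree bounds $\delta\le d_u,d_v\le \Delta$ on each edge. Starting from
$$BSO(G)\leq \frac{2mSDD(G)+M_2^{*}(G)F(G)}{2SO(G)},$$
what I need is an upper bound on the numerator expressible purely in terms of $m,\Delta,\delta$.

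For $SDD(G)=\sum_{uv\in E(G)}\tfrac{1}{2}\bigl(\tfrac{d_u}{d_v}+\tfrac{d_v}{d_u}\bigr)$, I would observe that the function $x\mapsto x+1/x$ on $[\delta/\Delta,\Delta/\delta]$ attains its maximum at the endpoints, so each edge summand is bounded by $\tfrac{1}{2}\bigl(\tfrac{\Delta}{\delta}+\tfrac{\delta}{\Delta}\bigr)=\tfrac{\Delta^2+\delta^2}{2\Delta\delta}$, giving $SDD(G)\le \tfrac{m(\Delta^2+\delta^2)}{2\Delta\delta}$. For the other two, the trivial edgewise bounds $\tfrac{1}{d_ud_v}\le \tfrac{1}{\delta^2}$ and $d_u^2+d_v^2\le 2\Delta^2$ yield $M_2^{*}(G)\le \tfrac{m}{\delta^2}$ and $F(G)\le 2m\Delta^2$.

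Plugging these into Theorem \ref{th3,9} gives
$$2mSDD(G)+M_2^{*}(G)F(G)\le \frac{m^2(\Delta^2+\delta^2)}{\Delta\delta}+\frac{2m^2\Delta^2}{\delta^2}=\frac{m^2\bigl((\Delta^2+\delta^2)\delta+2\Delta^3\bigr)}{\Delta\delta^2}=\frac{m^2(2\Delta^3+\Delta^2\delta+\delta^3)}{\Delta\delta^2},$$
and dividing by $2SO(G)$ yields exactly the claimed inequality. The algebraic manipulation is routine; the only point needing mild care is to notice that the $\tfrac{(\Delta^2+\delta^2)\delta}{\Delta\delta^2}$ term contributes the $\Delta^2\delta+\delta^3$ inside the parenthesis, while the $\tfrac{2\Delta^2}{\delta^2}$ term contributes $2\Delta^3$ after a common denominator.

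The main (and only) subtlety is the equality analysis. Equality in the $SDD$ bound forces $\{d_u,d_v\}=\{\Delta,\delta\}$ on every edge; equality in $M_2^{*}(G)\le m/\delta^2$ forces $d_u=d_v=\delta$ on every edge; and equality in $F(G)\le 2m\Delta^2$ forces $d_u=d_v=\Delta$ on every edge. The latter two together require $\Delta=\delta$, i.e.\ $G$ is regular, and this is also consistent with the equality case of Theorem \ref{th3,9} in its regular subcase (the semiregular bipartite case for that theorem is excluded here because our intermediate bounds are tight only when all degrees coincide). Conversely, if $G$ is regular then $SDD(G)=m$, $M_2^{*}(G)=m/\Delta^2$, $F(G)=2m\Delta^2$ and $SO(G)=\sqrt{2}m\Delta$, and a direct check shows that the right-hand side reduces to $m/\sqrt{2}=BSO(G)$, confirming the equality characterization.
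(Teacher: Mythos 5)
Your derivation is correct and is exactly the intended route: the paper states this as an immediate consequence of Theorem \ref{th3,9} without writing out the details, and bounding $SDD(G)$, $M_2^{*}(G)$ and $F(G)$ edgewise by $\frac{m(\Delta^2+\delta^2)}{2\Delta\delta}$, $\frac{m}{\delta^2}$ and $2m\Delta^2$ respectively gives precisely the stated numerator, with the equality analysis forcing $\Delta=\delta$ as you say. The only blemish is the final sanity check, where for an $r$-regular graph both sides equal $\sqrt{2}\,m/r$ (equivalently $n/\sqrt{2}$), not $m/\sqrt{2}$; this is a typo that does not affect the argument.
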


\subsection{\large  Bounds in terms of the inverse sum indeg index and the geometric-arithmetic index}

\begin{theorem}\label{th3,10} %------
Let $G$ be a connected graph of order $n$ and size $m$ with the maximum degree $\Delta$ and the minimum degree $\delta$. Then
$$BSO(G)\leq \frac{H(G)SDD(G)+2M_2^{*}(G)ISI(G)}{\sqrt{2}GA(G)}$$
with equality if and only if $G$ is a regular graph.
\end{theorem}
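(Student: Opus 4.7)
The shape of the right-hand side---a sum of two products of degree-based sums divided by $GA(G)$---strongly suggests invoking Lemma~\ref{le2,5} with four edge-indexed sequences chosen so that the two cross terms on the right of Lemma~\ref{le2,5} reproduce $BSO(G)$ and $GA(G)$ (up to an explicit scalar), while the two factor pairs on the left reproduce $H(G)\,SDD(G)$ and $2M_2^{*}(G)\,ISI(G)$ respectively. The one structural observation that drives the choice is the factorisation
\[
\sqrt{\tfrac{1}{d_u^2}+\tfrac{1}{d_v^2}} \;=\; \tfrac{1}{d_ud_v}\cdot \sqrt{d_u^2+d_v^2},
\]
which naturally splits $BSO$ into a ``$1/(d_ud_v)$'' weight and a ``$\sqrt{d_u^2+d_v^2}$'' factor, so that the weight is exactly the summand of $M_2^{*}(G)$.

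Accordingly, for each edge $e=uv\in E(G)$ I would set
\[
c_e=\frac{1}{d_ud_v},\qquad a_e=\sqrt{\frac{d_u^2+d_v^2}{2}},\qquad d_e=\frac{2}{d_u+d_v},\qquad b_e=\sqrt{d_ud_v}.
\]
Direct substitution yields the six identities
$\sum_e c_e=M_2^{*}(G)$, $\sum_e d_e=H(G)$, $\sum_e c_ea_e^2=SDD(G)$, $\sum_e d_eb_e^2=2\,ISI(G)$, $\sum_e c_ea_e=\tfrac{1}{\sqrt{2}}\,BSO(G)$, and $\sum_e d_eb_e=GA(G)$. Plugging these into Lemma~\ref{le2,5} gives
\[
H(G)\,SDD(G)+2M_2^{*}(G)\,ISI(G)\;\geq\;2\cdot\tfrac{1}{\sqrt{2}}\,BSO(G)\cdot GA(G)\;=\;\sqrt{2}\,BSO(G)\,GA(G),
\]
and dividing by $\sqrt{2}\,GA(G)>0$ yields the stated inequality.

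For equality, I would invoke the equality clause of Lemma~\ref{le2,5}: since every $c_e$ and $d_e$ is strictly positive, equality forces $a_e=b_e=k$ to be the same constant across all edges. The pointwise identity $\sqrt{(d_u^2+d_v^2)/2}=\sqrt{d_ud_v}$ collapses to $(d_u-d_v)^2=0$, i.e.\ $d_u=d_v$ on every edge, and the constancy of $a_e$ across $e$ together with the connectedness of $G$ then forces $G$ to be regular; conversely, regularity makes every inequality above an equality. The only creative step is guessing the four sequences; once that is done the six identities are one-line computations and the equality analysis is automatic, so I anticipate no serious obstacle.
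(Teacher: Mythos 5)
Your proposal is correct and follows essentially the same route as the paper: the paper also applies Lemma~\ref{le2,5}, with the choice $a_i=\sqrt{d_u^2+d_v^2}$, $b_i=\sqrt{2d_ud_v}$, $c_i=\tfrac{1}{d_ud_v}$, $d_i=\tfrac{1}{d_u+d_v}$, which differs from yours only by constant rescalings that cancel in the final inequality. Your equality analysis (using $a_e=b_e$ pointwise plus connectedness) matches the paper's conclusion as well.
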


\begin{proof} Let $a_i=\sqrt{d_u^2+d_v^2}$, $b_i=\sqrt{2d_ud_v}$, $c_i=\frac{1}{d_ud_v}$ and $d_i=\frac{1}{d_u+d_v}$ in Lemma \ref{le2,5}. Then we have
\begin{align*}
 & \sum\limits_{uv\in E(G)}\frac{1}{d_u+d_v}\sum\limits_{uv\in E(G)}\frac{d_u^2+d_v^2}{d_ud_v}+\sum\limits_{uv\in E(G)}\frac{1}{d_ud_v}\sum\limits_{uv\in E(G)}\frac{2d_ud_v}{d_u+d_v}\\
\geq {}& 2\sum\limits_{uv\in E(G)}\frac{\sqrt{d_u^2+d_v^2}}{d_ud_v}\sum\limits_{uv\in E(G)}\frac{\sqrt{2d_ud_v}}{d_u+d_v},
\end{align*}
that is,
$$H(G)SDD(G)+2M_2^{*}(G)ISI(G)\geq \sqrt{2}BSO(G)GA(G),$$
that is,
$$BSO(G)\leq \frac{H(G)SDD(G)+2M_2^{*}(G)ISI(G)}{\sqrt{2}GA(G)}$$
with equality if and only if $\sqrt{d_u^2+d_v^2}=\sqrt{2d_ud_v}$ for any edge $uv$, that is, $G$ is a regular graph.
\end{proof}

\begin{corollary}\label{cor3,8} %------
Let $G$ be a connected graph of size $m$ with the maximum degree $\Delta$ and the minimum degree $\delta$. Then
$$BSO(G)\leq \frac{m^2\Delta^2+m^2\delta^2+4m\Delta ISI(G)}{2\sqrt{2}\Delta\delta^2GA(G)}$$
with equality if and only if $G$ is a regular graph.
\end{corollary}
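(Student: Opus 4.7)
The plan is to deduce this from Theorem~\ref{th3,10}, which already gives
$$BSO(G)\leq \frac{H(G)\,SDD(G)+2M_2^{*}(G)\,ISI(G)}{\sqrt{2}\,GA(G)}.$$
Since $GA(G)$ sits unchanged in both denominators (up to the factor $2\Delta\delta^2$), the task reduces to showing
$$H(G)\,SDD(G)+2M_2^{*}(G)\,ISI(G)\ \leq\ \frac{m^{2}(\Delta^{2}+\delta^{2})+4m\Delta\, ISI(G)}{2\Delta\delta^{2}},$$
after which multiplying through by $1/(\sqrt{2}\,GA(G))$ yields the desired corollary.

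To produce the right-hand side, I would bound each edge-weight factor in $H$, $SDD$, $M_2^{*}$ by its extremal degree value, applied termwise and then summed over $E(G)$. Concretely, using $\delta\leq d_u,d_v\leq \Delta$:
\begin{itemize}
\item $\displaystyle H(G)=\sum_{uv\in E(G)}\frac{2}{d_u+d_v}\ \leq\ \frac{m}{\delta}$;
\item $\displaystyle SDD(G)=\sum_{uv\in E(G)}\frac{1}{2}\!\left(\frac{d_u}{d_v}+\frac{d_v}{d_u}\right)\ \leq\ \frac{m(\Delta^{2}+\delta^{2})}{2\Delta\delta}$, since $x+1/x$ on $[\delta/\Delta,\Delta/\delta]$ is maximised at the endpoints;
\item $\displaystyle M_2^{*}(G)=\sum_{uv\in E(G)}\frac{1}{d_ud_v}\ \leq\ \frac{m}{\delta^{2}}$.
\end{itemize}
Multiplying the first two gives $H(G)\,SDD(G)\leq \frac{m^{2}(\Delta^{2}+\delta^{2})}{2\Delta\delta^{2}}$, which matches the first summand of the target numerator, and the third bound gives $2M_2^{*}(G)\,ISI(G)\leq \frac{2m\,ISI(G)}{\delta^{2}}=\frac{4m\Delta\,ISI(G)}{2\Delta\delta^{2}}$, matching the second summand. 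Combining and inserting into the inequality of Theorem~\ref{th3,10} produces the claimed upper bound.

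For the equality analysis, the bound $H(G)\leq m/\delta$ requires $d_u+d_v=2\delta$ on every edge, forcing $G$ to be $\delta$-regular; similarly $M_2^{*}(G)\leq m/\delta^{2}$ forces regularity. The $SDD$ bound alone is sharp on $(\Delta,\delta)$-semiregular graphs, but the simultaneous equalities needed here collapse to $\Delta=\delta$, i.e.\ $G$ regular, which also matches the equality case of Theorem~\ref{th3,10}. No step presents a real obstacle; the only thing worth a second look is verifying that the three individual equality conditions are compatible and collectively force $G$ to be regular, so that the stated ``if and only if $G$ is regular'' clause holds.
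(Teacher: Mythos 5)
Your derivation is correct, and it is the natural (and evidently intended) route: the paper states Corollary~\ref{cor3,8} without proof as an immediate consequence of Theorem~\ref{th3,10}, obtained exactly as you do by the termwise bounds $H(G)\leq m/\delta$, $SDD(G)\leq m(\Delta^2+\delta^2)/(2\Delta\delta)$ and $M_2^{*}(G)\leq m/\delta^2$. Your equality analysis is also sound, since all three bounds and the inequality of Theorem~\ref{th3,10} become equalities precisely when $G$ is regular.
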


\subsection{\large  Bounds in terms of the Sombor index and the modified second Zagreb index}

\begin{theorem}\label{th3,11} %------
Let $G$ be a connected graph of size $m$ with the maximum degree $\Delta$ and the minimum degree $\delta$. Then
$$\frac{2m^2}{SO(G)}\leq BSO(G)\leq \frac{1}{\delta^2}SO(G)$$
with equality if and only if $G$ is a regular graph.
\end{theorem}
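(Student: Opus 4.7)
The plan is to handle the two bounds by separate elementary arguments, each pivoting on the identity
$$BSO(G)=\sum_{uv\in E(G)}\sqrt{\frac{1}{d_u^2}+\frac{1}{d_v^2}}=\sum_{uv\in E(G)}\frac{\sqrt{d_u^2+d_v^2}}{d_u d_v},$$
which directly connects $BSO$ and $SO$. This rewriting is the only structural observation needed; after it, the rest is inequality manipulation.

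For the upper bound, I would simply use $d_u d_v \geq \delta^2$ for every edge $uv$, which gives $\frac{1}{d_u d_v} \leq \frac{1}{\delta^2}$, hence
$$BSO(G)=\sum_{uv\in E(G)}\frac{\sqrt{d_u^2+d_v^2}}{d_u d_v}\leq \frac{1}{\delta^2}\sum_{uv\in E(G)}\sqrt{d_u^2+d_v^2}=\frac{SO(G)}{\delta^2}.$$
Equality forces $d_u d_v=\delta^2$ on every edge, which combined with connectedness forces $G$ to be $\delta$-regular.

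For the lower bound, the first step is Cauchy--Schwarz in the telescoping form
$$m^2=\Bigl(\sum_{uv\in E(G)}1\Bigr)^2\leq \Bigl(\sum_{uv\in E(G)}\frac{\sqrt{d_u^2+d_v^2}}{d_u d_v}\Bigr)\Bigl(\sum_{uv\in E(G)}\frac{d_u d_v}{\sqrt{d_u^2+d_v^2}}\Bigr),$$
so that the first factor is exactly $BSO(G)$. The second factor is then bounded using $2d_u d_v\leq d_u^2+d_v^2$, which yields $\frac{d_u d_v}{\sqrt{d_u^2+d_v^2}}\leq \frac{\sqrt{d_u^2+d_v^2}}{2}$, and therefore
$$\sum_{uv\in E(G)}\frac{d_u d_v}{\sqrt{d_u^2+d_v^2}}\leq \frac{1}{2}\sum_{uv\in E(G)}\sqrt{d_u^2+d_v^2}=\frac{SO(G)}{2}.$$
Combining these two estimates gives $m^2\leq BSO(G)\cdot SO(G)/2$, i.e.\ $BSO(G)\geq 2m^2/SO(G)$.

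The only mild subtlety, and the step that needs a clean write-up, is the equality analysis of the lower bound: both Cauchy--Schwarz (which forces $\frac{d_u^2+d_v^2}{d_u^2 d_v^2}$ to be constant over $E(G)$) and the AM--GM step $2d_u d_v\leq d_u^2+d_v^2$ (which forces $d_u=d_v$ on every edge) must attain equality simultaneously. The AM--GM condition alone, together with the connectedness of $G$, already gives regularity, and regularity immediately implies both equality conditions, so the two characterisations coincide. No auxiliary lemma from Section~2 is needed for either direction.
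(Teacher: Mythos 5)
Your proof is correct, and both bounds rest on the same key rewriting $\sqrt{1/d_u^2+1/d_v^2}=\sqrt{d_u^2+d_v^2}/(d_ud_v)$ that the paper uses. The upper bound is identical to the paper's. For the lower bound the paper invokes Lemma \ref{le2,5} with $a_i=b_i=1/\sqrt{d_ud_v}$, $c_i=d_i=\sqrt{d_u^2+d_v^2}$ to get $2\,SO(G)\,BSO(G)\geq 2\bigl(\sum_{uv}\sqrt{(d_u^2+d_v^2)/(d_ud_v)}\bigr)^2$ and then applies $d_u^2+d_v^2\geq 2d_ud_v$ termwise; with $a_i=b_i$ and $c_i=d_i$ that lemma degenerates to plain Cauchy--Schwarz, so your direct Cauchy--Schwarz step $m^2\leq BSO(G)\sum_{uv}d_ud_v/\sqrt{d_u^2+d_v^2}$ followed by the same AM--GM bound is the same mechanism with the two ingredients applied in the opposite order; the only thing it buys is self-containedness (no appeal to the auxiliary lemma). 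Your equality analysis is also slightly more careful than the paper's: you correctly note that the AM--GM step alone forces $d_u=d_v$ on every edge, hence regularity by connectedness, and that regularity conversely saturates both steps, whereas the paper simply asserts the equality case (and, in the upper bound, misstates the condition as $d_u=d_v=\Delta$ where it should be $\delta$).
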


\begin{proof}It is easy to see that
$$BSO(G)=\sum\limits_{uv\in E(G)}\sqrt{\frac{1}{d_u^2}+\frac{1}{d_v^2}}\leq \frac{1}{\delta^2}\sum\limits_{uv\in E(G)}\sqrt{d_u^2+d_v^2}\leq\frac{1}{\delta^2}SO(G),$$
with equality if and only if $d_u=d_v=\Delta$ for any edges, that is, $G$ is a regular graph.

Let $a_i=b_i=\frac{1}{\sqrt{d_ud_v}}$ and $c_i=d_i=\sqrt{d_u^2+d_v^2}$ in Lemma \ref{le2,5}. Then
$$2\sum\limits_{uv\in E(G)}\sqrt{d_u^2+d_v^2}\sum\limits_{uv\in E(G)}\sqrt{\frac{1}{d_u^2}+\frac{1}{d_v^2}}\geq 2\left(\sum\limits_{uv\in E(G)}\sqrt{\frac{d_u^2+d_v^2}{d_ud_v}}\right)^2\geq 4m^2,$$
that is,
$$2SO(G)BSO(G)\geq 4m^2,$$
that is,
$$BSO(G)\geq \frac{2m^2}{SO(G)}$$
with equality if and only if $G$ is a regular graph.
\end{proof}

\begin{theorem}\label{th3,12} %------
Let $G$ be a connected graph of order $n$ and size $m$ with the maximum degree $\Delta$ and the minimum degree $\delta$. Then
$$\left\lvert \frac{1}{m}BSO(G)-\frac{1}{m^2}SO(G)M_2^{*}(G)\right\rvert\leq \xi(m)\frac{\sqrt{2}(\Delta+\delta)
(\Delta-\delta)^2}{\Delta^2\delta^2},$$
where
$$\xi(m)=\frac{1}{4}\left(1-\frac{1+(-1)^{m+1}}{2m^2}\right).$$
\end{theorem}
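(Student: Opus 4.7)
The plan is to apply Lemma \ref{le2,6} (the discrete Grüss-type inequality) with $n=m$ and with the sequences indexed by the edges of $G$. The key algebraic observation is that
$$\sqrt{\frac{1}{d_u^2}+\frac{1}{d_v^2}}=\frac{\sqrt{d_u^2+d_v^2}}{d_ud_v},$$
so if we set $a_{uv}=\sqrt{d_u^2+d_v^2}$ and $b_{uv}=\frac{1}{d_ud_v}$ for each edge $uv\in E(G)$, then $\sum_{uv}a_{uv}b_{uv}=BSO(G)$, while $\sum_{uv}a_{uv}=SO(G)$ and $\sum_{uv}b_{uv}=M_2^{*}(G)$. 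This choice makes the left-hand side of Lemma \ref{le2,6} agree precisely with the expression $\frac{1}{m}BSO(G)-\frac{1}{m^2}SO(G)M_2^{*}(G)$ appearing in the theorem.

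Next I would determine the tight bounds for the two sequences. Since $\delta\leq d_u,d_v\leq \Delta$, we get $\sqrt{2}\,\delta\leq a_{uv}\leq \sqrt{2}\,\Delta$ and $\frac{1}{\Delta^2}\leq b_{uv}\leq \frac{1}{\delta^2}$, so that in the notation of Lemma \ref{le2,6},
$$(A-a)(B-b)=\sqrt{2}(\Delta-\delta)\cdot\frac{\Delta^2-\delta^2}{\Delta^2\delta^2}=\frac{\sqrt{2}(\Delta+\delta)(\Delta-\delta)^2}{\Delta^2\delta^2},$$
which is exactly the factor appearing on the right-hand side of the theorem.

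It remains to identify the combinatorial coefficient $\frac{1}{m}\lfloor m/2\rfloor\bigl(1-\frac{1}{m}\lfloor m/2\rfloor\bigr)$ with $\xi(m)$. A quick parity check handles this: when $m$ is even, $\lfloor m/2\rfloor=m/2$ and the coefficient equals $\frac{1}{4}$; when $m$ is odd, $\lfloor m/2\rfloor=(m-1)/2$ and the coefficient equals $\frac{m-1}{2m}\cdot\frac{m+1}{2m}=\frac{1}{4}\bigl(1-\frac{1}{m^2}\bigr)$. Both cases are captured uniformly by $\xi(m)=\frac{1}{4}\bigl(1-\frac{1+(-1)^{m+1}}{2m^2}\bigr)$, since $1+(-1)^{m+1}$ vanishes for even $m$ and equals $2$ for odd $m$.

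There is essentially no obstacle: the proof reduces to correctly matching the three ingredients in Lemma \ref{le2,6} (the products, the range widths, and the parity-dependent constant). The only place one needs to be careful is the bookkeeping for $\xi(m)$ in the odd case, which is a routine parity computation.
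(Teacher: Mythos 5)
Your proposal is correct and follows essentially the same route as the paper: both apply Lemma \ref{le2,6} with $a_{uv}=\sqrt{d_u^2+d_v^2}$ and $b_{uv}=\frac{1}{d_ud_v}$, use the bounds $\sqrt{2}\delta\leq a_{uv}\leq\sqrt{2}\Delta$ and $\frac{1}{\Delta^2}\leq b_{uv}\leq\frac{1}{\delta^2}$, and identify the Gr\"uss constant with $\xi(m)$. Your explicit parity verification of $\xi(m)$ is a detail the paper states without computation, but the argument is the same.
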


\begin{proof} Let $a_i=\sqrt{d_u^2+d_v^2}$ and $b_i=\frac{1}{d_ud_v}$ in Lemma \ref{le2,6}. Then $a=\sqrt{2}\delta$, $A=\sqrt{2}\Delta$, $b=\frac{1}{\Delta^2}$ and $B=\frac{1}{\delta^2}$. By Lemma \ref{le2,6}, we have
\begin{align*}
 & \left\lvert \frac{1}{m}\sum\limits_{uv\in E(G)}\sqrt{\frac{1}{d_u^2}+\frac{1}{d_v^2}}-\frac{1}{m^2}\sum\limits_{uv\in E(G)}\sqrt{d_u^2+d_v^2}\sum\limits_{uv\in E(G)}\frac{1}{d_ud_v}\right\rvert \\
\leq {}& \frac{1}{m}\left\lfloor\frac{m}{2}\right\rfloor\left(1-\frac{1}{m}\left\lfloor\frac{m}{2}\right\rfloor\right)
\sqrt{2}(\Delta-\delta)(\frac{1}{\delta^2}-\frac{1}{\Delta^2}),
\end{align*}
that is,
$$\left\lvert \frac{1}{m}BSO(G)-\frac{1}{m^2}SO(G)M_2^{*}(G)\right\rvert\leq \xi(m)\frac{\sqrt{2}(\Delta+\delta)
(\Delta-\delta)^2}{\Delta^2\delta^2},$$
where
$$\xi(m)=\frac{1}{m}\left\lfloor\frac{m}{2}\right\rfloor\left(1-\frac{1}{m}\left\lfloor\frac{m}{2}\right\rfloor\right)
=\frac{1}{4}\left(1-\frac{1+(-1)^{m+1}}{2m^2}\right).$$
\end{proof}

\section{\large  The first Banhatti-Sombor index of trees}

In this section, we determine the trees with the maximum and minimum first Banhatti-Sombor index among the set of trees of order $n$, respectively.
For a tree $T_n$ of order $n$ with maximum degree $\Delta$, denote by $n_i$ the number of vertices with degree $i$ in $T_n$ for $1\leq i\leq\Delta$, and $m_{i,j}$ the number of edges of $T_n$ connecting vertices of degree $i$ and $j$, where $1\leq i\leq j\leq\Delta$. Note that $T_n$ is connected, so $m_{1,1}=0$ for $n\geq 3$. Let $N=\{(i,j)\in\mathbb{N}\times\mathbb{N}:1\leq i\leq j\leq\Delta\}$. Then clearly the following relations hold:

\begin{equation}\label{eq1}
|V(T_n)|=n=\sum_{i=1}^{\Delta}n_i,
\end{equation}

\begin{equation}\label{eq2}
|E(T_n)|=n-1=\sum_{(i,j)\in N}m_{i,j},
\end{equation}
and
\begin{equation}\label{eq3}
\left\{
\begin{aligned}
&2m_{1,1}+m_{1,2}+\ldots+m_{1,\Delta}=n_1,\\
&m_{1,2}+2m_{2,2}+\ldots+m_{2,\Delta}=2n_2,\\
&\ldots\\
&m_{1,\Delta}+m_{2,\Delta}+\ldots+2m_{\Delta,\Delta}=\Delta n_\Delta.
\end{aligned}
\right.
\end{equation}
It follows easily from (\ref{eq1}) and (\ref{eq3}) that
 \begin{equation}\label{eq4}
 n=\sum_{(i,j)\in N}\frac{i+j}{ij}m_{i,j}.
 \end{equation}
And the definition of the first Banhatti-Sombor index is equivalent to
 \begin{equation}\label{eq5}
 SO(G)=\sum_{(i,j)\in P}\sqrt{\frac{1}{i^2}+\frac{1}{j^2}}m_{i,j}.
 \end{equation}

\begin{theorem}\label{th4,1} %------
Let $T_n$ be a tree with $n$-vertex. Then
$$\frac{\sqrt{2}(n-3)}{2}+\sqrt{5}\leq BSO(T_n)\leq \sqrt{1+(n-1)^2}.$$
The equality in the left hand side holds if and only if $T_n\cong P_n$, and the equality in the
right hand side holds if and only if $T_n\cong K_{1,\,n-1}$.
\end{theorem}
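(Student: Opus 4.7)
The plan is to handle the two bounds separately, each reducing to a pointwise (edge-level) inequality combined with a global identity for trees. A direct computation first verifies the bounds are attained: for $T_n=K_{1,n-1}$ every edge has degree pair $(1,n-1)$, so $BSO(K_{1,n-1})=(n-1)\sqrt{1+\tfrac{1}{(n-1)^2}}=\sqrt{1+(n-1)^2}$, and for $T_n=P_n$ there are two edges of type $(1,2)$ and $n-3$ edges of type $(2,2)$, giving $BSO(P_n)=\sqrt{5}+\tfrac{\sqrt{2}(n-3)}{2}$.

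For the upper bound, I would apply Theorem \ref{th3,4} to obtain $BSO(T_n)\le\sqrt{(n-1)\,ID(T_n)}$ and use the standard identity $ID(T_n)=\sum_{v\in V(T_n)}\tfrac{1}{d_v}$. It then suffices to show $\sum_v\tfrac{1}{d_v}\le(n-1)+\tfrac{1}{n-1}$ with equality iff $T_n\cong K_{1,n-1}$. Letting $n_i$ be the number of vertices of degree $i$, the relations $\sum n_i=n$ and $\sum i\,n_i=2(n-1)$ give $\sum_v\tfrac{1}{d_v}=n-\sum_{i\ge2}\tfrac{i-1}{i}n_i$ under the linear constraint $\sum_{i\ge2}(i-1)n_i=n-2$. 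Because $\tfrac{i-1}{i}$ is strictly increasing in $i$, this objective is minimised precisely by concentrating all the non-leaf mass on a single vertex of degree $n-1$, which forces $T_n\cong K_{1,n-1}$; this also matches the semiregular-bipartite equality case of Theorem \ref{th3,4}.

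For the lower bound the key step is a tangent-line inequality at the two edge types of the path:
\[
\sqrt{\tfrac{1}{i^2}+\tfrac{1}{j^2}}\ \ge\ \alpha+\beta\!\left(\tfrac{1}{i}+\tfrac{1}{j}\right),\qquad \alpha=\tfrac{3\sqrt{2}}{2}-\sqrt{5},\ \beta=\sqrt{5}-\sqrt{2},
\]
for all positive integers $i\le j$ with $i+j\ge3$, with equality only at $(i,j)\in\{(1,2),(2,2)\}$. Granting this, summing over the edges of $T_n$ and invoking identities (\ref{eq2}) and (\ref{eq4}) yields
\[
BSO(T_n)\ \ge\ \alpha(n-1)+\beta n\ =\ \frac{\sqrt{2}(n-3)}{2}+\sqrt{5},
\]
and equality forces every edge to be of type $(1,2)$ or $(2,2)$, so $\Delta(T_n)\le2$, which for a connected tree characterises $T_n\cong P_n$.

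The main obstacle is proving the pointwise inequality. The constants $\alpha,\beta$ were picked so that equality holds at both $(1,2)$ and $(2,2)$, which produces the convenient identities $\alpha+\beta=\tfrac{1}{\sqrt{2}}$ and $\alpha<0$. For $i,j\ge2$, rewriting the right-hand side as $\tfrac{1}{\sqrt{2}}\bigl(\tfrac{1}{i}+\tfrac{1}{j}\bigr)+\alpha\bigl[1-\bigl(\tfrac{1}{i}+\tfrac{1}{j}\bigr)\bigr]$ shows the correction is a product of $\alpha<0$ and a nonnegative factor (since $\tfrac{1}{i}+\tfrac{1}{j}\le1$), hence nonpositive, so the right-hand side is at most $\tfrac{1}{\sqrt{2}}\bigl(\tfrac{1}{i}+\tfrac{1}{j}\bigr)\le\sqrt{\tfrac{1}{i^2}+\tfrac{1}{j^2}}$; joint equality collapses to $i=j=2$. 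The remaining case $i=1$, $j\ge2$ reduces to showing that $g(y)=\sqrt{1+y^2}-\alpha-\beta(1+y)$ satisfies $g(y)\ge0$ on $y\in(0,\tfrac{1}{2}]$; a routine derivative check gives that $g$ is decreasing there (since $\tfrac{y}{\sqrt{1+y^2}}\le\tfrac{1}{\sqrt{5}}<\beta$ for $y\le\tfrac{1}{2}$) and $g(\tfrac{1}{2})=0$, so equality occurs only at $j=2$.
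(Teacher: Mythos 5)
Your proposal is correct, and it splits naturally into a half that mirrors the paper and a half that genuinely departs from it. For the lower bound, your tangent-line inequality $\sqrt{\tfrac{1}{i^2}+\tfrac{1}{j^2}}\geq\alpha+\beta\bigl(\tfrac1i+\tfrac1j\bigr)$ with $\alpha=\tfrac{3\sqrt2}{2}-\sqrt5$, $\beta=\sqrt5-\sqrt2$ is exactly the paper's assertion that its auxiliary function $f(x,y)=\sqrt{\tfrac{1}{x^2}+\tfrac{1}{y^2}}+(\sqrt2-\sqrt5)\tfrac{x+y}{xy}+\sqrt5-\tfrac{3\sqrt2}{2}$ is nonnegative and vanishes only at $(1,2)$ and $(2,2)$; the paper reaches the same linear combination by eliminating $m_{1,2}$ and $m_{2,2}$ from identities (\ref{eq2}) and (\ref{eq4}) and then simply declares $f(x,y)>0$ ``easy to see,'' whereas you actually verify it (the split $\alpha+\beta(\tfrac1i+\tfrac1j)=\tfrac{1}{\sqrt2}(\tfrac1i+\tfrac1j)+\alpha[1-(\tfrac1i+\tfrac1j)]$ for $i,j\geq2$, plus the monotonicity check of $g$ on $(0,\tfrac12]$ for $i=1$, is a complete and correct proof of the missing step). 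For the upper bound you take a different and cleaner route: instead of the paper's elimination of $m_{1,\Delta}$, $m_{\Delta,\Delta}$, the sign analysis of a second auxiliary function, and the monotonicity of $h(\Delta)$, you combine Theorem \ref{th3,4} with the identity $ID(T_n)=\sum_v\tfrac{1}{d_v}$ and the sharp bound $\sum_v\tfrac{1}{d_v}\leq(n-1)+\tfrac{1}{n-1}$, whose equality case alone already forces the star (so you do not even need the equality condition of Theorem \ref{th3,4} to characterize extremality). Your weighted-average argument for that bound, rewriting the constraint as $\sum_{i\geq2}(i-1)n_i=n-2$ and using $\tfrac{i-1}{i}\geq\tfrac{i-1}{n-1}$, is correct. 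The trade-off is that the paper's method, for all its bookkeeping, also yields the intermediate quantity $h(\Delta)$ as a bound for fixed maximum degree (which is what its chemical-tree Theorem \ref{th4,2} exploits), while your argument goes straight to the global extremum; as a proof of Theorem \ref{th4,1} itself, yours is complete and arguably tighter in its logic.
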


\begin{proof}
 First, we consider the equality in the left side. Let $N_1=\Big\{(i,j)\in N:(i,j)\neq(1,1),(i,j)\neq(1,2),(i,j)\neq(2,2)\Big\}$. By equation (\ref{eq4}), we have
$$3m_{1,2}+2m_{2,2}=2n-\sum_{(i,j)\in N_1}\frac{2(i+j)}{ij}m_{i,j},$$
and by equation (\ref{eq2}), we have
$$m_{1,2}+m_{2,2}=n-1-\sum_{(i,j)\in N_1}m_{i,j}.$$
Then we obtain the following expression for $m_{1,2}$ and $m_{2,2}$:
$$m_{1,2}=2+\sum_{(i,j)\in N_1}\Big[2-\frac{2(i+j)}{ij}\Big]m_{i,j},$$
$$m_{2,2}=n-3+\sum_{(i,j)\in N_1}\Big[\frac{2(i+j)}{ij}-3\Big]m_{i,j}.$$
According to the expression (\ref{eq5}), we have
\begin{eqnarray*}
BSO(T_n)&  =  & m_{1,2}\sqrt{\frac{1}{4}+1}+m_{2,2}\sqrt{\frac{1}{4}+\frac{1}{4}}+\sum_{(i,j)\in N_1}\!\sqrt{\frac{1}{i^2}+\frac{1}{j^2}}m_{i,j}\\
&  =  & \sqrt{5}\Big[1+\sum_{(i,j)\in N_1}\Big(1-\frac{i+j}{ij}\Big)m_{i,j}\Big]+\frac{\sqrt{2}}{2}\Big\{n-3+\sum_{(i,j)\in N_1}\Big[\frac{2(i+j)}{ij}-3\Big]m_{i,j}\Big\}\\
&     &+\sum_{(i,j)\in N_1}\sqrt{\frac{1}{i^2}+\frac{1}{j^2}}m_{i,j}\\
&  =  & \frac{\sqrt{2}}{2}(n-3)+\sqrt{5}+\sum_{(i,j)\in N_1}\Big[\sqrt{\frac{1}{i^2}+\frac{1}{j^2}}+({\sqrt{2}-\sqrt{5}})\frac{i+j}{ij}+\sqrt{5}-\frac{3\sqrt{2}}{2}\Big]m_{i,j}.
\end{eqnarray*}

Let $f(x,y)=\sqrt{\frac{1}{x^2}+\frac{1}{y^2}}+({\sqrt{2}-\sqrt{5}})\frac{x+y}{xy}+\sqrt{5}-\frac{3\sqrt{2}}{2}$, where $(x,y)\in N$, it is easy to see that $f(1,2)=0$, $f(2,2)=0$ and $f(x,y)> 0$ for $(x,y)\in N_1$. Therefore, $BSO(T_n)=\frac{\sqrt{2}}{2}(n-3)+\sqrt{5}$ if and only if $m_{i,j}=0$ for all $(i,j)\in N_1$. And this occurs if and only if $T_n\cong P_n$. Conversely, if $T_n\cong P_n$, by (\ref{eq5}), we obtain $$BSO(P_n)=2\sqrt{\frac{1}{4}+1}+(n-3)\sqrt{\frac{1}{4}+\frac{1}{4}}=\frac{\sqrt{2}}{2}(n-3)+\sqrt{5}.$$
Thus, we have $BSO(T_n)\geq BSO(P_n)$ with equality if and only if $T_n\cong P_n$.

Now, we consider the equality on the right side. Let $N_2=\Big\{(i,j)\in N:(i,j)\neq(1,1),(i,j)\neq(1,\Delta),(i,j)\neq(\Delta,\Delta)\Big\}$. Similar to the proof of the above, by equation (\ref{eq4}), we have
$$(\Delta+1)m_{1,\Delta}+2m_{\Delta,\Delta}=\Delta n-\sum_{(i,j)\in N_2}\Delta\frac{i+j}{ij}m_{i,j},$$
 and by equation (\ref{eq2}), we have
$$m_{1,\Delta}+m_{\Delta,\Delta}=n-1-\sum_{(i,j)\in N_2}m_{i,j}.$$
Then we obtain the following expression for $m_{1,\Delta}$ and $m_{\Delta,\Delta}$:
$$(\Delta-1)m_{1,\Delta}=(\Delta-2)n+2-\sum_{(i,j)\in N_2}\Big(\Delta\frac{i+j}{ij}-2\Big)m_{i,j},$$
$$(\Delta-1)m_{\Delta,\Delta}=n-(\Delta+1)+\sum_{(i,j)\in N_2}\Big(\Delta\frac{i+j}{ij}-(\Delta+1)\Big)m_{i,j}.$$
According to the expression (\ref{eq4}), we have
\begin{eqnarray*}
BSO(T_n) &  =  & m_{1,\Delta}\sqrt{\frac{1}{\Delta^2}+1}+m_{\Delta,\Delta}\sqrt{\frac{1}{\Delta^2}+\frac{1}{\Delta^2}}+\sum_{(i,j)\in N_2}\sqrt{\frac{1}{i^2}+\frac{1}{j^2}}m_{i,j}\\
&  =  & \frac{\sqrt{\Delta^2+1}}{\Delta(\Delta-1)}\Big[(\Delta\!-\!2)n+2-\sum_{(i,j)\in N_2}\Big(\Delta\frac{i+j}{ij}-2\Big)m_{i,j}\Big]\\
&     & +\frac{\sqrt{1+\Delta^2}}{\Delta(\Delta-1)}\Big[n\!-\!(\Delta+1)+\!\sum_{(i,j)\in N_2}\!\Big(\Delta\frac{i+j}{ij}\!-\!(\Delta+1)\Big)m_{i,j}\Big]\\
&     & +\!\sum_{(i,j)\in N_2}\!\sqrt{\frac{1}{i^2}+\frac{1}{j^2}}m_{i,j}\\
&  =  & \frac{(\Delta-2)n\sqrt{\Delta^2+1}+\sqrt{2}(n-\Delta-1)+2\sqrt{\Delta^2+1}}{\Delta(\Delta-1)}\\
&     & +\sum_{(i,j)\in N_2}\Big[\sqrt{\frac{1}{i^2}+\frac{1}{j^2}}+\frac{\sqrt{2}-\sqrt{\Delta^2+1}}{\Delta-1}\frac{i+j}{ij}+\frac{2\sqrt{\Delta^2+1}-\sqrt{2}(\Delta+1)}
{\Delta(\Delta-1)}\Big]m_{i,j}.
\end{eqnarray*}

Let $g(x,y)=\sqrt{\frac{1}{x^2}+\frac{1}{y^2}}+\frac{\sqrt{2}-\sqrt{\Delta^2+1}}{\Delta-1}\frac{x+y}{xy}+\frac{2\sqrt{\Delta^2+1}-\sqrt{2}(\Delta+1)}{\Delta(\Delta-1)}$,  where $(x,y)\in N$, it is easy to see that $f(1,\Delta)=0$, $f(\Delta,\Delta)=0$ and $f(x,y)< 0$ for $(x,y)\in N_2$. Therefore, $BSO(T_n)=\frac{(\Delta-2)n\sqrt{\Delta^2+1}+\sqrt{2}(n-\Delta-1)+2\sqrt{\Delta^2+1}}{\Delta(\Delta-1)}$ if and only if $m_{i,j}=0$ for all $(i,j)\in N_2$. And this occurs if and only if $n_2=n_3=\ldots=n_{\Delta-1}=0$.

Let $h(x)=\frac{(x-2)n\sqrt{x^2+1}+\sqrt{2}(n-x-1)+2\sqrt{x^2+1}}{x(x-1)}$. By derivative, we know that $h(x)$ is an increasing function for $[2, +\infty)$. Thus 
$$h(\Delta)\leq h(n-1)=\sqrt{1+(n-1)^2}.$$
Conversely, $BSO(K_{1,\,n-1})=\sqrt{1+(n-1)^2}$. Thus, we have $BSO(T_n)\leq BSO(K_{1,\,n-1})$ with equality if and only if $T_n\cong K_{1,\,n-1}$.

\end{proof}

Similar to the method used in Theorem \ref{th4,1}, we now give an upper bound on chemical trees without its proof.

\begin{theorem}\label{th4,2} %------
Let $T_n$ is a chemical tree with $n$ vertices. If $n-2=0(mod\ 3)$, then
$$BSO(T_n)\leq \frac{2\sqrt{17}(n+1)+\sqrt{2}(n-5)}{12}$$
with equality if and only if $n_2=n_3=0$.
\end{theorem}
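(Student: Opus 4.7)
The plan is to mimic the upper-bound half of Theorem~\ref{th4,1}, taking advantage of the fact that a chemical tree has $\Delta\le 4$, so the edge types $(i,j)$ appearing in (\ref{eq5}) form a finite, explicit list. The candidate extremal tree should have only vertices of degree $1$ and $4$: solving $n_1+n_4=n$ and $n_1+4n_4=2(n-1)$ forces $n_4=(n-2)/3$ and $n_1=(2n+2)/3$, which is precisely why the divisibility hypothesis $n\equiv 2\pmod 3$ is needed. Such a tree exists for every such $n\ge 5$ (for example, the caterpillar whose spine consists of $n_4$ vertices, with enough pendants attached to make every spine vertex have degree $4$), and it realizes $m_{1,4}=(2n+2)/3$ and $m_{4,4}=(n-5)/3$.

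Following the template of Theorem~\ref{th4,1}, I would isolate the edge types $(1,4)$ and $(4,4)$ and solve the $2\times 2$ system obtained by restricting (\ref{eq2}) and (\ref{eq4}) (with $\Delta=4$) for $m_{1,4}$ and $m_{4,4}$ in terms of the remaining multiplicities. Writing $N_3=\{(i,j)\in N:(i,j)\ne(1,1),(1,4),(4,4)\}$, a routine linear calculation yields
$$m_{1,4}=\frac{2n+2}{3}-\sum_{(i,j)\in N_3}\frac{4(i+j)-2ij}{3ij}\,m_{i,j},\qquad m_{4,4}=\frac{n-5}{3}+\sum_{(i,j)\in N_3}\frac{4(i+j)-5ij}{3ij}\,m_{i,j}.$$
Substituting into (\ref{eq5}) (using $\sqrt{1+\tfrac{1}{16}}=\tfrac{\sqrt{17}}{4}$ and $\sqrt{\tfrac{2}{16}}=\tfrac{\sqrt{2}}{4}$), the $n$-dependent part collapses exactly to $\tfrac{2\sqrt{17}(n+1)+\sqrt{2}(n-5)}{12}$, leaving a remainder $\sum_{(i,j)\in N_3}g(i,j)\,m_{i,j}$ with
$$g(i,j)=\sqrt{\tfrac{1}{i^2}+\tfrac{1}{j^2}}+\frac{\sqrt{2}\bigl(4(i+j)-5ij\bigr)-\sqrt{17}\bigl(4(i+j)-2ij\bigr)}{12\,ij}.$$

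The crux of the argument is then to verify $g(i,j)<0$ for each of the seven pairs $(1,2),(1,3),(2,2),(2,3),(2,4),(3,3),(3,4)$ making up $N_3$; this is the main obstacle, although it is purely computational, since each case reduces to a numerical comparison among surds such as $\sqrt{2},\sqrt{5},\sqrt{10},\sqrt{17}$ that can be settled by squaring. Given these seven inequalities, non-negativity of the $m_{i,j}$ immediately yields $BSO(T_n)\le\tfrac{2\sqrt{17}(n+1)+\sqrt{2}(n-5)}{12}$, with equality precisely when $m_{i,j}=0$ for every $(i,j)\in N_3$, i.e.\ exactly when $n_2=n_3=0$, as claimed.
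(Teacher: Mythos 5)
Your proposal is correct and follows exactly the route the paper intends: the paper omits the proof of Theorem~\ref{th4,2}, stating only that it is ``similar to the method used in Theorem~\ref{th4,1}'', and your specialization of that method to $\Delta=4$ (solving for $m_{1,4}$ and $m_{4,4}$, substituting into (\ref{eq5}), and reducing to the sign of $g$ on the seven remaining edge types) reproduces the stated bound and equality condition. The seven inequalities $g(i,j)<0$ that you leave as routine do all check out numerically (e.g.\ $g(1,3)\approx-0.052$, $g(2,4)\approx-0.020$, $g(3,4)\approx-0.012$ are the tightest), so the argument is complete once those are written out.
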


\small {

}

\end{document}